\documentclass[preprint,12pt]{elsarticle}

\usepackage{amssymb}
\usepackage{bbm}
\usepackage{amsmath}
\usepackage{lineno}
\usepackage[numbers]{natbib}
\usepackage{url}
\usepackage[margin=1.5cm]{geometry}
\usepackage[colorlinks=true]{hyperref}
\usepackage{mathrsfs}

\newtheorem{theorem}{Theorem}
\newtheorem{lemma}[theorem]{Lemma}
\newtheorem{remark}[theorem]{Remark}
\newtheorem{algorithm}[theorem]{Algorithm}
\newtheorem{corollary}[theorem]{Corollary}
\newproof{proof}{Proof}


\begin{document}

\begin{frontmatter}

\title{A Weiszfeld-like algorithm for a Weber location problem constrained to a closed and convex set\tnoteref{support}}

\tnotetext[support]{Research partially supported by the CONICET, the SECYT-UNC and the ANPCyT.}

\author[gat]{Germ\'an A. Torres\corref{cor1}}

\ead{torres@famaf.unc.edu.ar}

\cortext[cor1]{Corresponding author}

\address[gat]{Facultad de Matem\'atica, Astronom\'ia y F\'isica, Universidad Nacional de C\'ordoba, CIEM (CONICET), Medina Allende s/n, Ciudad Universitaria (5000) C\'ordoba, Argentina}

\begin{abstract}
The Weber problem consists of finding a point in $\mathbbm{R}^n$ that minimizes the weighted sum of distances from $m$ points in $\mathbbm{R}^n$ that are not collinear. An application that motivated this problem is the optimal location of facilities in the 2-dimensional case. A classical method to solve the Weber problem, proposed by Weiszfeld in 1937, is based on a fixed point iteration. 

In this work a Weber problem constrained to a closed and convex set is considered. A Weiszfeld-like algorithm, well defined even when an iterate is a vertex, is presented. The iteration function $Q$ that defines the proposed algorithm, is based mainly on an orthogonal projection over the feasible set, combined with the iteration function of a modified Weiszfeld algorithm presented by Vardi and Zhang in 2001. 

It can be seen that the proposed algorithm generates a sequence of feasible iterates that have descent properties. Under certain hypotheses, the limit of this sequence satisfies the KKT optimality conditions, is a fixed point of the iteration function that defines the algorithm, and is the solution of the constrained minimization problem. Numerical experiments confirmed the theoretical results.
\end{abstract}

\begin{keyword}
location \sep Weber problem \sep Weiszfeld algorithm \sep fixed point iteration
\MSC 90B85 \sep 90C25 \sep 90C30
\end{keyword}

\end{frontmatter}

\linenumbers

\section{Introduction}
 Let $a^1, \ldots, a^m$ be $m$ distinct points in the space $\mathbbm{R}^n$, called vertices, and positive numbers $w_1, \ldots, w_m$, called weights. The function $f : \mathbbm{R}^n \rightarrow \mathbbm{R}$ defined by
\begin{equation}
\label{equation: weber function}
f( x ) = \sum_{j=1}^m w_j \left\| x - a^j \right\|,
\end{equation} 
is called the Weber function, where $\left\| \cdot \right\|$ denotes the Euclidean norm. It is well-known that this function is not differentiable at the vertices, and strictly convex if the vertices are not collinear (we will assume this hypothesis from now on).

The Weber problem (also known as the Fermat-Weber problem) is to find a point in $\mathbbm{R}^n$ that minimizes the weighted sum of Euclidean distances from the $m$ given points, that is, we have to find the solution of the following unconstrained optimization problem:
\begin{equation} 
\label{equation: unconstrained weber problem} 
\begin{array}{rl}
\displaystyle \mathop{ \mathrm{argmin} }_x & \displaystyle f( x ) \\ \mathrm{subject \, to} & x \in \mathbbm{R}^n.
\end{array}
\end{equation}
This problem has a unique solution $x^u$ in $\mathbbm{R}^n$.

The problem was also stated as a pure mathematical problem by Fermat \cite{wesolowsky1993,krarup1997}, Cavalieri \cite{polya1962}, Steiner \cite{couranthilbert1968}, Fasbender \cite{fasbender1846} and many others. Several solutions, based on geometrical arguments, were proposed by Torricelli and Simpson. In \cite{kupitzmartini1997} historical details and geometric aspects were presented by Kupitz and Martini. In \cite{weber1909} Weber formulated the problem (\ref{equation: unconstrained weber problem}) from an economical point of view. The vertices represent customers or demands, the solution to the problem denotes the location of a new facility, and the weights are costs associated with the interactions between the new facility and the customers.

Among several schemes to solve the Weber location problem (see \cite{chatelon1978,eyster1973,kuhn1967,overton1983}), one of the most popular methods was presented by Weiszfeld in \cite{weiszfeld1937,weiszfeldplastria2009}. The Weiszfeld algorithm is an iterative method based on the first-order necessary conditions for a stationary point of the objective function. 

If we define $T_0 : \mathbbm{R}^n \rightarrow \mathbbm{R}^n$ by:
\begin{equation} 
\label{equation: weiszfeld iteration function}
T_0(x) = \left\{ \begin{array}{ll} \frac{ \displaystyle \sum_{j=1}^m \frac{ w_j a^j }{ \left\| x - a^j \right\| } } { \displaystyle \sum_{j=1}^m \frac{ w_j }{ \left\| x - a^j \right\| } }, & \quad \text{if $x \neq a^1, \ldots, a^m$}, \\ & \\ a^k, & \quad \text{if $x = a^k$, $k = 1, \ldots, m$}, \end{array} \right.
\end{equation} 
the Weiszfeld algorithm is:
\begin{equation} 
\label{equation: weiszfeld algorithm}
x^{(l)} = T_0 \left( x^{(l-1)} \right), \quad l \in \mathbbm{N},
\end{equation}
where $x^{(0)} \in \mathbbm{R}^n$ is a starting point.

The Weiszfeld algorithm (\ref{equation: weiszfeld algorithm}), despite of its simplicity, has a serious problem if some $x^{(l)}$ lands accidentally in a vertex $a^k$, because the algorithm gets stuck at $a^k$, even when $a^k$ is not the solution of (\ref{equation: unconstrained weber problem}). Many authors studied the set of initial points for which the sequence generated by the Weiszfeld algorithm yields in a vertex (see \cite{kuhn1973,chandrasekaran1989,brimberg1995,canovas2002,brimberg2003,bernal2005}). Vardi and Zhang \cite{vardizhang2001} derived a simple but nontrivial modification of the Weiszfeld algorithm in which they solved the problem of landing in a vertex.

Generalizations and new techniques for the Fermat-Weber location problem have been developed in recent years. In \cite{eckhardt1980} Eckhardt applied the Weiszfeld algorithm to generalized Weber problems in Banach spaces. An exact algorithm for a Weber problem with attraction and repulsion was presented by Chen et al. in \cite{chenhansenjaumardtuy1992}. Kaplan and Yang \cite{kaplanyang1997} proved a duality theorem which includes as special cases a great variety of choices of norms in the terms of the Fermat-Weber sum. In \cite{carrizosa1998} Carrizosa et al. studied the so called Regional Weber Problem, which allows the demand not to be concentrated onto a finite set of points, but follows an arbitrary probability measure. In \cite{dreznerwesolowsky2002} Drezner and Wesolowsky studied the case where different $l_p$ norms are used for each demand point. In \cite{jalalkrarup2003} the so called Complementary Problem (the Weber problem with one negative weight) was studied by Jalal and Krarup, and geometrical solutions were given. In \cite{drezner2009} Drezner presented a Weiszfeld-like iterative procedure and convergence is proved if appropriate conditions hold. 

In some practical problems it is necessary to consider barriers (forbidden regions). Barriers were first introduced to location modeling by Katz and Cooper \cite{katzcooper1981}. There exist several heuristic and iterative algorithms for single-facility location problems for distance computations in the presence of barriers (see \cite{anejaparlar1994,buttcavalier1996,bischoffklamroth2007,bischofffleischmannklamroth2009}). In \cite{pfeifferklamroth2008} Pfeiffer and Klamroth presented a unified formulation for problems with barriers and network location problems. A complete reference to barriers in location problems can be found in \cite{klamroth2002}. Barriers can be applied to model real life problems where regions like lakes and mountains are forbidden. 

On the other hand, there are location problems whose solution needs to lie within a closed set. For example, see \cite{schaeferhurter1974} for a discussion of the case when the solution is constrained to be within a maximum distance of each demand point. Drezner and Wesolowsky \cite{dreznerwesolowsky1983} studied the problem of locating an obnoxious facility with rectangular distances ($l_1$ norm), where the facility must lie within some prespecified region (linear constraints). A primal-dual algorithm to deal with the constrained Fermat-Weber problem using mixed norms was developed in \cite{michelotlefebvre1987} by Idrissi et al.. In \cite{hansenpeetersthisse1982} Hansen et al. presented an algorithm for solving the Weber problem when the set of feasible locations is the union of a finite number of convex polygons. In \cite{pilottatorres2011} Pilotta and Torres considered a Weber location problem with box constraints.

Constrained Weber problems arise when we require that the solution is in an area (feasible region) determined by, for example, environmental and/or political reasons. It could be the case for a facility producing dangerous materials that must be installed in a restricted (constrained) area. Another example could be the location of a plant in an industrial zone or of a hospital in a non-polluted area.

In this paper a constrained location problem is considered. An algorithm is proposed to solve the following problem:
\begin{equation} 
\label{equation: constrained weber problem} 
\begin{array}{rl}
\displaystyle \mathop{ \mathrm{argmin} }_x & \displaystyle f( x ) \\ \mathrm{subject \, to} & x \in \Omega,
\end{array}
\end{equation}
where $\Omega$ is a closed and convex set, generalizing the problem formulated in \cite{pilottatorres2011}. Problem (\ref{equation: constrained weber problem}) could be seen as a nonlinear programming problem and solved by standard solvers, but they may fail since the Weber function is not differentiable at the vertices.

It can be proved that problem (\ref{equation: constrained weber problem}) has a unique solution $x^*$, since the function $f$ is strictly convex and $\Omega$ is a closed and convex set. On the other hand, it is well-known that the convex hull of the given vertices $a^1, \ldots, a^m$ contains the solution $x^u$ of the unconstrained Weber problem (see for instance \cite[pp. 100]{kuhn1973}). If $\Omega$ contains the convex hull, both solutions $x^*$ and $x^u$ agree. In other cases, the solution $x^*$ is not necessarily a projection of $x^u$ over $\Omega$ (see \cite{pilottatorres2011}). The algorithm is based basically on a slight variation of an orthogonal projection of the Weiszfeld algorithm presented in \cite{vardizhang2001}, that is well defined even when an iterate coincides with a vertex. Properties of the sequence generated by the proposed algorithm related with the minimization problem \ref{equation: constrained weber problem} will be proved in the following sections.

The paper is structured as follows: Section \ref{section: the modified weiszfeld algorithm} describes the results in \cite{vardizhang2001} in which a modified Weiszfeld algorithm is presented and some notation is introduced. In Section \ref{section: the proposed algorithm} the proposed algorithm is defined. Section \ref{section: some definitions and technical results} is dedicated to definitions and technical lemmas. In Section \ref{section: convergence results} the main results about convergence to optimality are presented. Numerical experiments are considered in Section \ref{section: numerical experiments}. Finally, conclusions are given in Section \ref{section: conclusions}.

Some words about notation. As it was mentioned, we will call $x^u$ the solution of problem (\ref{equation: unconstrained weber problem}) and $x^*$ the solution of problem (\ref{equation: constrained weber problem}). The symbols $\| \cdot \|$ and $\langle \cdot , \cdot \rangle$ will refer to the standard Euclidean norm and standard inner product in $\mathbbm{R}^n$, respectively. For a function $f : \mathbbm{R} \rightarrow \mathbbm{R}$ we will denote by $f^{\prime}(a-)$ the left-hand side derivative at $a$, and by $f^{\prime}(a+)$ the right-hand side derivative at $a$.

\section{The modified Weiszfeld algorithm} 
\label{section: the modified weiszfeld algorithm}
This section reviews the main results presented in \cite{vardizhang2001} in which the authors generalize the Weiszfeld algorithm for the case that an iterate lands on a vertex. From now on, this algorithm will be referred to as the modified Weiszfeld algorithm.

In order to make notation easier, we define the function $A : \mathbbm{R}^n \rightarrow \mathbbm{R}$ by:
\begin{equation} 
\label{equation: weight function}
A(x) = \left\{ \begin{array}{ll} \displaystyle \sum_{j=1}^m \frac{ w_j }{ 2 \left\| x - a^j \right\| }, & \quad \text{if $x \neq a^1, \ldots, a^m$}, \\ & \\ \displaystyle \sum_{\substack{j=1 \\ j \neq k}}^m \frac{ w_j }{ 2 \left\| a^k - a^j \right\| }, & \quad \text{if $x = a^k$, $k = 1, \ldots, m$}.  \end{array} \right. 
\end{equation}
Notice that $A(x) > 0$ for all $x \in \mathbbm{R}^n$. In \cite[pp. 563]{vardizhang2001}, the number $A(a^k)$ was called $A_k$. 

A generalization for the iteration function $T_0$, defined in (\ref{equation: weiszfeld iteration function}), is given by $\widetilde{T} : \mathbbm{R}^n \rightarrow \mathbbm{R}^n$ defined as follows:
\begin{equation} 
\label{equation: generalized weiszfeld iteration function}
\widetilde{T}(x) = \left\{ \begin{array}{ll} \displaystyle \frac{ \displaystyle \sum_{j=1}^m \frac{ w_j a^j }{ \left\| x - a^j \right\| } }{ 2 A(x) }, & \quad \text{if $x \neq a^1, \ldots, a^m$}, \\ & \\ \displaystyle \frac{ \displaystyle \sum_{\substack{j=1 \\ j \neq k}}^m \frac{ w_j a^j }{ \left\| a^k - a^j \right\| } }{ 2 A\left(a^k\right) }, & \quad \text{if $x = a^k$, $k = 1, \ldots, m$}. \end{array} \right.
\end{equation}
Notice that $\widetilde{T}$ coincides with $T_o$ in $\mathbbm{R}^n - \left\{ a^1, \ldots, a^m \right\}$.

Let $\widetilde{R} : \mathbbm{R}^n \rightarrow \mathbbm{R}^n$ and $r : \mathbbm{R}^n \rightarrow \mathbbm{R}$ be:
\begin{eqnarray}
\widetilde{R}(x) & = & \left\{ \begin{array}{ll} \displaystyle \sum_{j=1}^m \frac{ w_j \left( a^j - x \right) }{ \left\| x - a^j \right\| }, & \quad \text{if $x \neq a^1, \ldots, a^m$}, \\ & \\ \displaystyle \sum_{\substack{j=1 \\ j \neq k}}^m \frac{ w_j \left( a^j - a^k \right) }{ \left\| a^k - a^j \right\| }, & \quad \text{if $x = a^k$, $k = 1, \ldots, m$}, \end{array} \right. 
\label{equation: definition of widetilder}
\\
& \nonumber
\\
r(x) & = & \| \widetilde{R}(x) \|, \qquad \forall \, x \in \mathbbm{R}^n. \nonumber
\end{eqnarray}
The function $\widetilde{R}$ generalizes the negative gradient of the Weber function since, for all $x \neq a^1, \ldots, a^m$,
\begin{equation} 
\label{equation: nabla equal to minus widetilder}
\nabla f(x) = - \widetilde{R}(x).
\end{equation}

The following lemma is very easy to prove (see \cite[equation (14)]{vardizhang2001}), and it relates the functionals $\widetilde{T}$ and $\widetilde{R}$.
\begin{lemma} 
\label{lemma: widetilder equal to 2A(widetildet - x)}
For all $x \in \mathbbm{R}^n$ we have $\widetilde{R}(x) = 2 A(x) \left[ \widetilde{T}(x) - x \right]$.
\end{lemma}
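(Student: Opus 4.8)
**The plan is to verify the identity directly from the definitions, splitting into the two cases that appear in the piecewise definitions of $\widetilde{R}$ and $\widetilde{T}$.**

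First I would treat the generic case $x \neq a^1, \ldots, a^m$. Here I would start from the right-hand side, $2 A(x)[\widetilde{T}(x) - x]$, substitute the top branch of \eqref{equation: generalized weiszfeld iteration function} for $\widetilde{T}(x)$ and the top branch of \eqref{equation: weight function} for $A(x)$, and simplify. The factor $2A(x)$ cancels the denominator $2A(x)$ inside $\widetilde{T}(x)$, leaving $\sum_{j=1}^m \frac{w_j a^j}{\|x - a^j\|} - 2A(x)\, x = \sum_{j=1}^m \frac{w_j a^j}{\|x - a^j\|} - \left(\sum_{j=1}^m \frac{w_j}{\|x - a^j\|}\right) x = \sum_{j=1}^m \frac{w_j(a^j - x)}{\|x - a^j\|}$, which is exactly $\widetilde{R}(x)$ by the top branch of \eqref{equation: definition of widetilder}.

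Next I would handle the vertex case $x = a^k$ for some $k \in \{1,\dots,m\}$. The argument is identical in structure: substitute the bottom branches of \eqref{equation: generalized weiszfeld iteration function} and \eqref{equation: weight function}, use that $2A(a^k)$ cancels the denominator in the bottom branch of $\widetilde{T}(a^k)$, and observe that $2A(a^k)\, a^k = \left(\sum_{j \neq k} \frac{w_j}{\|a^k - a^j\|}\right) a^k$, so that $2A(a^k)[\widetilde{T}(a^k) - a^k] = \sum_{j \neq k} \frac{w_j(a^j - a^k)}{\|a^k - a^j\|} = \widetilde{R}(a^k)$, matching the bottom branch of \eqref{equation: definition of widetilder}.

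There is essentially no obstacle here — the only thing to be careful about is keeping the two cases aligned with the correct branches of each piecewise definition and noting that $A(x)$ never vanishes, so the manipulations are legitimate. The identity is purely algebraic and, as the text notes, amounts to equation (14) of \cite{vardizhang2001}.
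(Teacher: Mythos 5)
Your proof is correct and is exactly the direct verification the paper has in mind (the paper omits the computation, simply citing equation (14) of \cite{vardizhang2001} and calling the lemma ``very easy to prove''). Both cases of the piecewise definitions are handled properly and the cancellation of the factor $2A(x)$ is legitimate since $A(x)>0$ everywhere.
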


If we define $\gamma : \mathbbm{R}^n \rightarrow \mathbbm{R}$ by:
\begin{equation*}
\gamma(x) = \left\{ \begin{array}{ll} 0, & \quad \text{if $x \neq a^1, \ldots, a^m$}, \\ 0, & \quad \text{if $x = a^k$ and $r\left(a^k\right) = 0$ for some $k = 1, \ldots, m$}, \\ \displaystyle w_k / r\left(a^k\right), & \quad \text{if $x = a^k$ and $r\left(a^k\right) \neq 0$ for some $k = 1, \ldots, m$}, \end{array} \right.
\end{equation*}
we can see that $\gamma(x) \geq 0$ for all $x \in \mathbbm{R}^n$.

The modified Weiszfeld algorithm presented in \cite{vardizhang2001} is defined by:
\begin{equation*}
x^{(l)} = T\left( x^{(l-1)} \right), \quad l \in \mathbbm{N},
\end{equation*}
where $x^{(0)} \in \mathbbm{R}^n$ and $T : \mathbbm{R}^n \rightarrow \mathbbm{R}^n$ is given by:
\begin{equation} 
\label{equation: iteration function of the modified algorithm with beta}
T(x) = \left( 1 - \beta(x) \right) \widetilde{T}(x) + \beta(x) x,
\end{equation}
where $\beta : \mathbbm{R}^n \rightarrow \mathbbm{R}$ is defined by $\beta(x) = \min \left\{ 1, \gamma(x) \right\}$.

\begin{remark} 
\label{remark: properties of beta}
\begin{enumerate}
\item[(a)] If $x \neq a^1, \ldots, a^m$, then $\beta(x) = 0$ because $\gamma(x) = 0$. So, we can deduce that $T(x) = \widetilde{T}(x)$. Notice that this fact implies that the functional $T$ is continuous in $\mathbbm{R}^n - \left\{ a^1, \ldots, a^m \right\}$.
\item[(b)] It can be seen that if $a^k \neq x^u$, then $0 < \beta( a^k ) < 1$ (see \cite[pp. 563]{vardizhang2001}).
\item[(c)] From equation (\ref{equation: iteration function of the modified algorithm with beta}) we obtain that $T(x) - x = \left( 1 - \beta(x) \right) \left( \widetilde{T}(x) - x \right)$ for $x \in \mathbbm{R}^n$.
\end{enumerate}
\end{remark}

The main result in \cite[pp. 562]{vardizhang2001} is:
\begin{theorem} 
\label{theorem: main result of modified weiszfeld algorithm}
The following propositions are equivalent:
\begin{enumerate}
\item[(a)] $x = x^u$.
\item[(b)] $T(x) = x$.
\item[(c)] $r(x) \leq \eta(x)$.
\end{enumerate}
where 
\begin{equation*}
\eta(x) = \left\{ \begin{array}{ll} 0, & \quad \text{if $x \neq a^1, \ldots, a^m$}, \\ w_k, & \quad \text{if $x = a^k$, $k = 1, \ldots, m$}. \end{array} \right.
\end{equation*}
\end{theorem}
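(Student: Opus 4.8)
The plan is to prove the equivalences by establishing the cycle $(a) \Rightarrow (c) \Rightarrow (b) \Rightarrow (a)$, treating the vertex case and the non-vertex case separately throughout, since the functionals $\widetilde{T}$, $\widetilde{R}$, $\beta$ and the threshold $\eta$ all have a two-branch definition.

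First I would dispose of the non-vertex case, where $\eta(x) = 0$. Here $f$ is differentiable and, by \eqref{equation: nabla equal to minus widetilder}, $\nabla f(x) = -\widetilde{R}(x)$, so $r(x) = \|\nabla f(x)\|$. Since $f$ is strictly convex, $x = x^u$ if and only if $\nabla f(x) = 0$, i.e. $r(x) = 0 = \eta(x)$; this gives $(a) \Leftrightarrow (c)$ at a non-vertex, and also $r(x) \le \eta(x)$ forces $r(x) = 0$. For $(a) \Leftrightarrow (b)$ at a non-vertex, use Remark~\ref{remark: properties of beta}(a) to get $T(x) = \widetilde{T}(x)$, then Lemma~\ref{lemma: widetilder equal to 2A(widetildet - x)}: $T(x) - x = \widetilde{T}(x) - x = \widetilde{R}(x)/(2A(x))$, and since $A(x) > 0$ this vanishes iff $\widetilde{R}(x) = 0$ iff $r(x) = 0$ iff $x = x^u$.

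Next, the vertex case $x = a^k$, where $\eta(a^k) = w_k$. For $(c) \Rightarrow (a)$: assuming $r(a^k) \le w_k$, I must show $a^k$ minimizes $f$. Since $f$ is convex but nonsmooth at $a^k$, the optimality condition is $0 \in \partial f(a^k)$, and the subdifferential at $a^k$ is $\widetilde{R}(a^k)$ perturbed by the unit ball scaled by $w_k$ coming from the nondifferentiable term $w_k\|x - a^k\|$; concretely $-\partial f(a^k) = \{\widetilde{R}(a^k) - w_k u : \|u\| \le 1\}$ (this is the standard one-sided directional derivative computation from \cite{vardizhang2001}, or can be derived via $f'(a^k; d) = -\langle \widetilde{R}(a^k), d\rangle + w_k\|d\|$). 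Then $0$ lies in this set exactly when $\|\widetilde{R}(a^k)\| = r(a^k) \le w_k$, giving the equivalence $(a) \Leftrightarrow (c)$ at a vertex. For $(b) \Leftrightarrow (c)$ at a vertex I use the definitions of $\gamma$ and $\beta$: by Remark~\ref{remark: properties of beta}(c), $T(a^k) = a^k$ iff $\beta(a^k) = 1$ or $\widetilde{T}(a^k) = a^k$; when $r(a^k) = 0$ the latter holds via Lemma~\ref{lemma: widetilder equal to 2A(widetildet - x)}, and when $r(a^k) \neq 0$ we have $\beta(a^k) = \min\{1, w_k/r(a^k)\}$, which equals $1$ precisely when $w_k/r(a^k) \ge 1$, i.e. $r(a^k) \le w_k$. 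Either way $T(a^k) = a^k \Leftrightarrow r(a^k) \le w_k = \eta(a^k)$.

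The main obstacle I anticipate is the vertex-case implication $(c) \Rightarrow (a)$: establishing cleanly that $0 \in \partial f(a^k)$ is equivalent to $r(a^k) \le w_k$ requires correctly identifying the subdifferential (equivalently, showing the directional derivative $f'(a^k; d) \ge 0$ for all $d$), and the sharpest point is checking the worst direction $d = -\widetilde{R}(a^k)/\|\widetilde{R}(a^k)\|$ when $r(a^k) \neq 0$, where $f'(a^k; d) = -r(a^k) + w_k \ge 0$ iff $r(a^k) \le w_k$; the degenerate sub-case $r(a^k) = 0$ should be handled first and is immediate. Once the subgradient characterization is in hand, the rest of the vertex case and the entire non-vertex case are short computations with Lemma~\ref{lemma: widetilder equal to 2A(widetildet - x)} and the positivity $A(x) > 0$, and assembling the cycle is routine.
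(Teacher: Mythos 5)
Your proof is correct, but note that the paper itself offers no proof of this statement: Theorem~\ref{theorem: main result of modified weiszfeld algorithm} is imported verbatim from Vardi and Zhang \cite{vardizhang2001} as the foundation on which the constrained algorithm is built, so there is no in-paper argument to compare against line by line. What you have written is essentially a reconstruction of Vardi--Zhang's own reasoning: the non-vertex case reduces to $\nabla f(x)=0$ via (\ref{equation: nabla equal to minus widetilder}) and Lemma~\ref{lemma: widetilder equal to 2A(widetildet - x)} together with $A(x)>0$, and the vertex case combines the convex optimality criterion $f'(a^k;d)\ge 0$ for all $d$ (equivalently $0\in\partial f(a^k)$, which by Cauchy--Schwarz is exactly $r(a^k)\le w_k$) with the observation that $T(a^k)-a^k=(1-\beta(a^k))(\widetilde{T}(a^k)-a^k)$ vanishes iff $\beta(a^k)=1$ or $\widetilde{T}(a^k)=a^k$. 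One small slip: when $r(a^k)\neq 0$ the minimizing unit direction for $f'(a^k;\cdot)$ is $d=+\widetilde{R}(a^k)/r(a^k)$ (the descent direction of the smooth part, since $\widetilde{R}$ is the \emph{negative} gradient), not $-\widetilde{R}(a^k)/r(a^k)$; the value $-r(a^k)+w_k$ you report is the one attained at the correct direction, so the conclusion is unaffected. It would also be worth saying explicitly that strict convexity gives uniqueness of the minimizer, which is what lets you pass from ``$x$ is a global minimizer'' to ``$x=x^u$'' in both cases.
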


\section{The proposed algorithm}
\label{section: the proposed algorithm}
This section is dedicated to describe the proposed algorithm, introducing some definitions and remarks.

First of all, we can notice that problem (\ref{equation: constrained weber problem}) has a unique solution, due to the fact that $f$ is a non-negative, strictly convex, and continuous function, $\lim_{\|x \| \to \infty} f(x) = \infty$ and $\Omega$ is closed and convex. 

In order to define the proposed algorithm at the vertices, we will need to determine which points of the segment that joins $a^k$ and $T(a^k)$ are in the feasible set $\Omega$. If $k = 1, \ldots, m$, let the set $\mathcal{S}_k$ be defined by:
\begin{equation*}
\mathcal{S}_k = \left\{ \lambda \in [0,1] : ( 1 - \lambda ) T(a^k) + \lambda a^k \in \Omega \right\}.
\end{equation*}
Notice that $\mathcal{S}_k$ could be equal to the empty set in case that $a^k$ and $T(a^k)$ do not belong to $\Omega$. On the other hand, if $a^k \in \Omega$, then $1 \in \mathcal{S}_k$, which means that $\mathcal{S}_k \neq \emptyset$. Thus, we can define:
\begin{equation*}
\lambda( a^k ) = \inf \mathcal{S}_k, \quad a^k \in \Omega.
\end{equation*}
In case a vertex $a^k$ is not in $\Omega$, there is no need to define the number $\lambda( a^k )$.

In the following lemma, a set of basic properties of $\lambda( a^k )$ are listed:
\begin{lemma} 
\label{lemma: properties of lambda(a^k)}
If $k = 1, \ldots, m$ and $a^k \in \Omega$ then:
\begin{enumerate}
\item[(a)] $\lambda( a^k ) \in [0,1]$.
\item[(b)] If $T(a^k) \in \Omega$ then $\lambda( a^k ) = 0$.
\item[(c)] If $T(a^k) \notin \Omega$ then $\lambda( a^k ) \in (0,1]$.
\end{enumerate}
\end{lemma}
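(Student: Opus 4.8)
The plan is to unpack the definition $\lambda(a^k) = \inf \mathcal{S}_k$ where $\mathcal{S}_k = \{\lambda \in [0,1] : (1-\lambda)T(a^k) + \lambda a^k \in \Omega\}$, and to exploit the convexity of $\Omega$ together with the standing hypothesis $a^k \in \Omega$. First I would observe that $a^k \in \Omega$ means $\lambda = 1$ gives the point $a^k \in \Omega$, so $1 \in \mathcal{S}_k$; hence $\mathcal{S}_k$ is a nonempty subset of $[0,1]$, its infimum exists, and $0 \le \lambda(a^k) \le 1$. That settles part (a). For part (b), if $T(a^k) \in \Omega$, then $\lambda = 0$ yields the point $T(a^k) \in \Omega$, so $0 \in \mathcal{S}_k$; since $\mathcal{S}_k \subseteq [0,1]$ we get $\lambda(a^k) = \inf \mathcal{S}_k = 0$.

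For part (c), suppose $T(a^k) \notin \Omega$. Then $0 \notin \mathcal{S}_k$, so $\mathcal{S}_k \subseteq (0,1]$ and therefore $\lambda(a^k) = \inf \mathcal{S}_k \ge 0$; combined with (a) this already gives $\lambda(a^k) \in [0,1]$, and the claim is $\lambda(a^k) > 0$. To get strict positivity I would argue that $\mathcal{S}_k$ is bounded away from $0$. Note that for $\lambda \in [0,1]$ the point $p(\lambda) := (1-\lambda)T(a^k) + \lambda a^k$ traces the segment from $T(a^k)$ to $a^k$, and $p$ is continuous (affine) in $\lambda$ with $p(0) = T(a^k) \notin \Omega$. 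Since $\Omega$ is closed, its complement is open, so there is $\varepsilon > 0$ with $p(\lambda) \notin \Omega$ for all $\lambda \in [0,\varepsilon)$; hence every element of $\mathcal{S}_k$ is $\ge \varepsilon$, giving $\lambda(a^k) \ge \varepsilon > 0$. The upper bound $\lambda(a^k) \le 1$ is already in hand from (a), so $\lambda(a^k) \in (0,1]$.

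The only genuinely delicate point — and the one I would state carefully rather than gloss over — is the strict inequality in (c): it relies on $\Omega$ being \emph{closed} (to push the point $T(a^k) \notin \Omega$ slightly along the segment and stay outside $\Omega$), whereas parts (a) and (b) use only that $a^k \in \Omega$. Convexity of $\Omega$ is not actually needed for this particular lemma — $\mathcal{S}_k$ is automatically an interval by convexity, which is a reassuring structural fact but not logically required here — so I would not dwell on it. One should also note that $\lambda(a^k) = 1$ is possible in case (c) (e.g.\ if $a^k$ is the only point of the segment lying in $\Omega$), which is why the interval in (c) is the half-open $(0,1]$ and not $(0,1)$.
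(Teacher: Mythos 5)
Your proof is correct and follows essentially the same route as the paper: part (a) from the definition of $\mathcal{S}_k$ (with nonemptiness coming from $1 \in \mathcal{S}_k$, which the paper notes just before the lemma), part (b) from $0 \in \mathcal{S}_k$, and part (c) by using closedness of $\Omega$ to find a neighbourhood of $T(a^k)$ disjoint from $\Omega$, hence an $\varepsilon > 0$ bounding $\mathcal{S}_k$ away from $0$. Your added remarks on which hypotheses are actually used are accurate but do not change the argument.
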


\begin{proof}
The proof of (a) follows from the definition of $\mathcal{S}_k$. If $T(a^k) \in \Omega$, then $0 \in \mathcal{S}_k$, so $\lambda(a^k) = 0$, and this proves (b). Finally, for item (c), let us consider that $T(a^k) \notin \Omega$. Since $\Omega$ is a closed set, there is an entire ball centered at $T(a^k)$ that does not intersect $\Omega$, which implies that there exists $\epsilon$ such that $( 1 - \lambda ) T(a^k) + \lambda a^k \notin \Omega$ for all $\lambda \in [0,\epsilon]$. Thus, $\lambda(a^k) \in (0,1]$ and this concludes the proof. \qed
\end{proof}

Let us call $P_{\Omega} : \mathbbm{R}^n \rightarrow \Omega$ the orthogonal projection over $\Omega$. Since $\Omega$ is a nonempty, closed and convex set, the operator $P_{\Omega}$ is a continuous function \cite[pp. 99]{andreassonevgrafovpatriksson2005}.

We define the iteration function $Q : \Omega \rightarrow \Omega$ by:
\begin{equation} 
\label{equation: iteration function of the proposed algorithm}
Q(x) = \left\{ \begin{array}{ll} P_{\Omega} \circ T(x), & \quad \text{if $x \neq a^1, \ldots, a^m$}, \\ \left( 1 - \lambda( a^k ) \right) T(a^k) + \lambda( a^k ) a^k, & \quad \text{if $x = a^k \in \Omega$, $k = 1, \ldots, m$}. \end{array} \right.
\end{equation}
There will be no need to define $Q$ outside $\Omega$ since the proposed algorithm generates a sequence of feasible points. The iteration function $Q$ at $x \in \Omega$ coincides with the orthogonal projection of $T(x)$ over the feasible set when $x$ is different from the vertices. Only when $x$ is a vertex $a^k$ belonging to $\Omega$, $Q(x)$ is defined as the farthest possible feasible point of the segment that joins $x$ with $T(x)$.

The following remark states some basic properties of the iteration function of the proposed algorithm.
\begin{remark} 
\label{remark: properties of Q}
\item[(a)] If $a^k \in \Omega$ and $T(a^k) \in \Omega$, then $Q(a^k) = T(a^k) = P_{\Omega} \circ T(a^k)$.
\item[(b)] If $a^k \in \Omega$, it can be seen that:
\begin{eqnarray*}
Q(a^k) - a^k & = & \left( 1 - \lambda( a^k ) \right) \left( T(a^k) - a^k \right),
\\
Q(a^k) - T(a^k) & = & - \lambda( a^k ) \left( T(a^k) - a^k \right).
\end{eqnarray*}
\item[(c)] The functional $Q$ is continuous in $\mathbbm{R}^n - \left\{ a^1, \ldots, a^m \right\}$.
\end{remark}

\begin{proof} 
The proofs of (a) and (b) are straightforward. For (c), since $P_{\Omega}$ is continuous in $\mathbbm{R}^n$ (see \cite[pp. 99]{andreassonevgrafovpatriksson2005}) and $T$ is continuous in $\mathbbm{R}^n - \left\{ a^1, \ldots, a^m \right\}$ (see Remark \ref{remark: properties of beta}), we have that $Q$ is continuous in $\mathbbm{R}^n - \left\{ a^1, \ldots, a^m \right\}$. \qed
\end{proof}

The proposed algorithm is described below.
\begin{algorithm} 
\label{algorithm: proposed algorithm}
Let $\Omega \subset \mathbbm{R}^n$ be a closed and convex set. Assume that $x^{(0)} \in \Omega$ is an initial approximation such that $f(x^{(0)}) \leq f(a^j)$ for all $j \in \left\{ 1, \ldots, m \right\}$ and $a^j \in \Omega$. Given $\varepsilon > 0$ a tolerance and $x^{(l-1)} \in \Omega$, do the following steps to compute $x^{(l)}$: 

\

\noindent {\bf Step 1:} Compute:
\begin{equation} 
\label{equation: step 1}
x^{(l)} = Q \left( x^{(l-1)} \right).
\end{equation}

\noindent {\bf Step 2:} Stop the execution if
\begin{equation*}
\left\| x^{(l)} - x^{(l-1)} \right\| < \varepsilon,
\end{equation*} 
and declare $x^{(l)}$ as solution to the problem (\ref{equation: constrained weber problem}). Otherwise return to Step 1.

\end{algorithm}

 From the definition of $Q$ it follows that Algorithm \ref{algorithm: proposed algorithm} generates a sequence of feasible iterates. Also notice that if there are vertices in the feasible set, $x^{(0)}$ can be one of them, for example, a vertex $a_s$ such that $f(a_s) \leq f(a^j)$ for all $a^j \in \Omega$. On the other hand, if there are no vertices in the feasible set, $x^{(0)}$ can be chosen as the projection over $\Omega$ of the null vector.

\section{Some definitions and technical results} 
\label{section: some definitions and technical results}
The purpose of this section is to define some entities and prove technical lemmas that will be important in the proof of the main results.

First of all, we will define some useful operators for making notation easier.
If $\mathcal{A} \subset \left\{ 1, \ldots, n \right\}$, then we define $\| \cdot \|_{\mathcal{A}} : \mathbbm{R}^n \rightarrow \mathbbm{R}$ and $\langle \cdot , \cdot \rangle_{\mathcal{A}} : \mathbbm{R}^n \times \mathbbm{R}^n \rightarrow \mathbbm{R}$ by:
\begin{equation*}
\| x \|_{\mathcal{A}} = \sqrt{ \sum_{j \in \mathcal{A}} x_j^2 }, \qquad \langle x , y \rangle_{\mathcal{A}} = \sum_{j \in \mathcal{A}} x_j y_j. 
\end{equation*}
Notice that, when $\mathcal{A} \subsetneq \left\{ 1, \ldots, n \right\}$, $\| \cdot \|_{\mathcal{A}}$ is not necessarily a norm and $\langle \cdot , \cdot \rangle_{\mathcal{A}}$ is not necessarily an inner product.

According to this definition, if $\mathcal{A}$ and $\mathcal{B}$ are sets such that $\mathcal{A} \cap \mathcal{B} = \emptyset$ and $\mathcal{A} \cup \mathcal{B} = \left\{ 1, \ldots, n \right\}$, it can be seen that:
\begin{eqnarray}
\| x \|^2 & = & \| x \|_{\mathcal{A}}^2 + \| x \|_{\mathcal{B}}^2, 
\label{equation: property norma}
\\
\langle x , y \rangle & = & \langle x , y \rangle_{\mathcal{A}} + \langle x , y \rangle_{\mathcal{B}}, 
\label{equation: property innerproda}
\\
c \langle x , y \rangle_{\mathcal{A}} & = & \langle cx , y \rangle_{\mathcal{A}} = \langle x , cy \rangle_{\mathcal{A}}. 
\label{equation: property cinnerproda}
\end{eqnarray}

For $x \in \Omega$, let us define the following sets of indices:
\begin{eqnarray*}
\mathcal{N}(x) & = & \left\{ k \in \mathbbm{N} : \quad 1 \leq k \leq n, \quad (T(x))_k \neq (Q(x))_k \right\}, 
\\
\mathcal{E}(x) & = & \left\{ k \in \mathbbm{N} : \quad 1 \leq k \leq n, \quad (T(x))_k = (Q(x))_k \right\}, 
\end{eqnarray*}
Notice that for all $x \in \mathbbm{R}^n$ we have that $\mathcal{N}(x) \cap \mathcal{E}(x) = \emptyset$ and $\mathcal{N}(x) \cup \mathcal{E}(x) = \left\{ 1, \ldots, n \right\}$.

Let $\alpha : \Omega \rightarrow \mathbbm{R}^n$ be the following function:
\begin{itemize}
\item If $x \neq a^1, \ldots, a^m$:
\begin{equation} 
\label{equation: definition of alpha1}
\alpha(x) = \displaystyle \sum_{j=1}^m \frac{ w_j }{ \| x - a^j \| } \left[ Q(x) - a^j \right].
\end{equation}
\item If $x = a^k \in \Omega$ for some $k = 1, \ldots, m$:
\begin{equation} 
\label{equation: definition of alpha2}
\alpha(x) = \displaystyle \sum_{\substack{j=1 \\ j \neq k}}^m \frac{ w_j }{ \| a^k - a^j \| } \left[ Q(a^k) - (1 - \beta(a^k)) a^j - \beta(a^k) a^k \right].
\end{equation}
\end{itemize}

It can be seen that the function $\alpha$ is related to the iteration function $Q$ of the proposed algorithm, and the iteration function $T$ of the modified algorithm. 
\begin{lemma} 
\label{lemma: properties of alpha}
If $x \in \Omega$, then $\alpha(x) = 2 A(x) \left[ Q(x) - T(x) \right].$
\end{lemma}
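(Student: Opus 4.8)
The plan is to verify the identity separately in the two regimes that define $\alpha$ and $Q$, namely the non-vertex case and the vertex case, using Lemma~\ref{lemma: widetilder equal to 2A(widetildet - x)} together with the definition of $A$ in \eqref{equation: weight function} as the bookkeeping device that turns sums of $w_j/\|x-a^j\|$ into $2A(x)$.

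\textbf{Non-vertex case.} Suppose $x \neq a^1, \ldots, a^m$. Then by Remark~\ref{remark: properties of beta}(a) we have $T(x) = \widetilde{T}(x)$, and by definition $Q(x) = P_\Omega \circ T(x)$, but we will not need the projection structure here — only that $Q(x)$ is some fixed vector. Starting from \eqref{equation: definition of alpha1}, I would split the bracket as $Q(x) - a^j = \bigl[Q(x) - T(x)\bigr] + \bigl[T(x) - a^j\bigr]$ and distribute the sum. The first piece gives $\bigl(\sum_j w_j/\|x-a^j\|\bigr)\bigl[Q(x)-T(x)\bigr] = 2A(x)\bigl[Q(x)-T(x)\bigr]$ directly from \eqref{equation: weight function}. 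The second piece is $\sum_j \frac{w_j}{\|x-a^j\|}\bigl[T(x) - a^j\bigr] = -\sum_j \frac{w_j(a^j - T(x))}{\|x-a^j\|}$; adding and subtracting $x$ inside, or more cleanly observing that $\sum_j \frac{w_j}{\|x-a^j\|} T(x) = 2A(x)\widetilde T(x)$ and $\sum_j \frac{w_j a^j}{\|x-a^j\|} = 2A(x)\widetilde T(x)$ by the very definition of $\widetilde T$ in \eqref{equation: generalized weiszfeld iteration function}, one sees this second piece vanishes. Hence $\alpha(x) = 2A(x)\bigl[Q(x)-T(x)\bigr]$ as claimed.

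\textbf{Vertex case.} Suppose $x = a^k \in \Omega$. Here I would work from \eqref{equation: definition of alpha2}, again splitting $Q(a^k) - (1-\beta(a^k))a^j - \beta(a^k)a^k = \bigl[Q(a^k) - T(a^k)\bigr] + \bigl[T(a^k) - (1-\beta(a^k))a^j - \beta(a^k)a^k\bigr]$. The first bracket, summed against $\sum_{j\neq k} w_j/\|a^k-a^j\|$, yields $2A(a^k)\bigl[Q(a^k)-T(a^k)\bigr]$ by the second branch of \eqref{equation: weight function}. For the second bracket, recall from \eqref{equation: iteration function of the modified algorithm with beta} that $T(a^k) = (1-\beta(a^k))\widetilde T(a^k) + \beta(a^k)a^k$, so $T(a^k) - (1-\beta(a^k))a^j - \beta(a^k)a^k = (1-\beta(a^k))\bigl[\widetilde T(a^k) - a^j\bigr]$. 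Summing against the weights and using the vertex branch of \eqref{equation: generalized weiszfeld iteration function}, which says $\sum_{j\neq k}\frac{w_j a^j}{\|a^k-a^j\|} = 2A(a^k)\widetilde T(a^k)$, together with $\sum_{j\neq k}\frac{w_j}{\|a^k-a^j\|} = 2A(a^k)$, this sum collapses to $(1-\beta(a^k))\bigl[2A(a^k)\widetilde T(a^k) - 2A(a^k)\widetilde T(a^k)\bigr] = 0$. Combining the two brackets gives $\alpha(a^k) = 2A(a^k)\bigl[Q(a^k)-T(a^k)\bigr]$.

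The computation is essentially routine; the only point requiring care — and the place where the statement's precise form of \eqref{equation: definition of alpha2} matters — is the vertex case, where the coefficients $(1-\beta(a^k))$ and $\beta(a^k)$ inside the bracket are exactly what is needed to make the residual term telescope against the definition of $T(a^k)$ as a convex combination of $\widetilde T(a^k)$ and $a^k$. I expect no genuine obstacle beyond keeping the two branches of each of $A$, $\widetilde T$, and $T$ correctly paired; one should simply be careful not to conflate $T$ with $\widetilde T$ at a vertex, since there $\beta(a^k)$ need not be zero.
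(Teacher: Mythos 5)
Your proof is correct and follows essentially the same route as the paper: a direct computation from the definitions of $\alpha$, $A$, and $\widetilde{T}$, using $T=\widetilde{T}$ off the vertices and $T(a^k)=(1-\beta(a^k))\widetilde{T}(a^k)+\beta(a^k)a^k$ at a vertex (the paper only sketches the vertex case as ``similar,'' and your expansion is exactly the intended one). Your splitting of the bracket into $[Q-T]$ plus a residual that telescopes to zero is just a cosmetic rearrangement of the paper's factoring argument.
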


\begin{proof}
If $x \neq a^1, \ldots, a^m$, then:
\begin{eqnarray*}
\alpha(x) & = & \sum_{j=1}^m \frac{ w_j }{ \| x - a^j \| } \left[ Q(x) - a^j \right] = \sum_{j=1}^m \frac{ w_j Q(x) }{ \| x - a^j \| } - \sum_{j=1}^m \frac{ w_j a^j }{ \| x - a^j \| } 
\\
&&
\\
& = & \left( \sum_{j=1}^m \frac{ w_j }{ \| x - a^j \| } \right) \left[ Q(x) - \frac{ \sum_{j=1}^m \frac{ w_j a^j }{ \| x - a^j \| } }{ \sum_{j=1}^m \frac{ w_j }{ \| x - a^j \| } }  \right] = 2 A(x) \left[ Q(x) - \widetilde{T}(x) \right] 
\\
&&
\\
& = & 2 A(x) \left[ Q(x) - T(x) \right].
\end{eqnarray*}
where in the last equalities we have used the definition of $\widetilde{T}$ as in (\ref{equation: generalized weiszfeld iteration function}), and the fact that $\widetilde{T}(x) = T(x)$ due to Remark \ref{remark: properties of beta}.

If $x = a^k$ for some $k = 1, \ldots, m$, we follow a similar procedure than in the previous case. \qed
\end{proof}

Now, we will define auxiliary functions that take into account the projection $P_{\Omega}$ in order to prove a descent property of $f$ (see next sections).
If $x \in \Omega$, we define:
\begin{enumerate}
\item[(a)] $E_x : \mathbbm{R}^n \rightarrow \mathbbm{R}^n$, where:
\begin{equation} 
\label{equation: definition of ex}
\left( E_x(y) \right)_k = \left\{ \begin{array}{ll} (Q(x))_k, & \quad \text{if $k \in \mathcal{N}(x)$}, \\ y_k, & \quad \text{if $k \in \mathcal{E}(x)$}. \end{array} \right.
\end{equation}
\item[(b)] If $\mathcal{E}(x) = \left\{ i_1, \ldots, i_r \right\} \neq \emptyset$ define $P_x : \mathbbm{R}^n \rightarrow \mathbbm{R}^r$ where:
\begin{equation*}
\left( P_x(y) \right)_k = y_{i_k}, \quad k = 1, \ldots, r.
\end{equation*}
\end{enumerate}

A useful property of $E_x$, that follows from the definition, is pointed out in the following remark.
\begin{remark} 
\label{remark: ex o q = q}
If $x \in \Omega$ then $E_x \circ Q(x) = Q(x)$.
\end{remark}

The iteration function $Q$ inherits an important property from the orthogonal projection $P_{\Omega}$.
\begin{lemma} 
\label{lemma: property of Q}
If $x \in \Omega$ we have that $\left\langle Q(x) - x , Q(x) - T(x) \right\rangle \leq 0$.  
\end{lemma}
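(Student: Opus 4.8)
The plan is to split into the two cases that appear in the definition \eqref{equation: iteration function of the proposed algorithm} of $Q$, and in each case reduce the claimed inequality to an elementary fact.

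First I would treat the case $x \neq a^1,\ldots,a^m$. Here $Q(x) = P_\Omega(T(x))$ by definition. I would invoke the variational characterization of the orthogonal projection onto a nonempty closed convex set: for every $y \in \mathbbm{R}^n$ and every $z \in \Omega$ one has $\langle y - P_\Omega(y), z - P_\Omega(y) \rangle \leq 0$ (this is the obtuse-angle property, available from the reference \cite{andreassonevgrafovpatriksson2005} already cited for continuity of $P_\Omega$). Applying it with $y = T(x)$ and $z = x \in \Omega$ gives $\langle T(x) - Q(x), x - Q(x) \rangle \leq 0$, and multiplying through by $(-1)^2$, i.e. rewriting both factors with reversed sign, yields exactly $\langle Q(x) - x, Q(x) - T(x) \rangle \leq 0$.

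Next I would handle the case $x = a^k \in \Omega$. Here I would use the two identities recorded in Remark \ref{remark: properties of Q}(b), namely $Q(a^k) - a^k = (1 - \lambda(a^k))(T(a^k) - a^k)$ and $Q(a^k) - T(a^k) = -\lambda(a^k)(T(a^k) - a^k)$. Substituting these into the inner product and using bilinearity,
\[
\left\langle Q(a^k) - a^k, Q(a^k) - T(a^k) \right\rangle = -\lambda(a^k)\bigl(1 - \lambda(a^k)\bigr)\,\bigl\| T(a^k) - a^k \bigr\|^2 .
\]
Since $\lambda(a^k) \in [0,1]$ by Lemma \ref{lemma: properties of lambda(a^k)}(a), the coefficient $\lambda(a^k)(1 - \lambda(a^k))$ is nonnegative, so the right-hand side is $\leq 0$, which finishes this case and hence the lemma.

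I do not expect a genuine obstacle: the only nontrivial ingredient is the variational inequality for $P_\Omega$, and everything else is a direct substitution. The one point to state carefully is that this characterization of $P_\Omega$ is exactly what is needed (that the ``angle'' between $T(x) - Q(x)$ and $x - Q(x)$ is obtuse), and to be careful with the signs when rewriting it in the form of the statement.
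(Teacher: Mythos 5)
Your proposal is correct and follows essentially the same two-case argument as the paper: the obtuse-angle (variational) characterization of $P_{\Omega}$ for $x \neq a^1,\ldots,a^m$, and the substitution of the identities from Remark \ref{remark: properties of Q}(b) together with $\lambda(a^k) \in [0,1]$ for the vertex case. The only difference is that you spell out the sign bookkeeping for the projection inequality, which the paper leaves implicit.
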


\begin{proof}
If $x \neq a^1, \ldots, a^m$, then $Q(x) = P_{\Omega} \circ T(x)$. By a property of the orthogonal projection \cite[pp. 93]{andreassonevgrafovpatriksson2005} we have that $\langle Q(x) - x , Q(x) - T(x) \rangle \leq 0$. 

If $x = a^k$ for some $k = 1, \ldots, m$, Remark \ref{remark: properties of Q} and Lemma \ref{lemma: properties of lambda(a^k)} imply:
\begin{equation*}
\left\langle Q(a^k) - a^k , Q(a^k) - T(a^k) \right\rangle = - \lambda( a^k ) \left( 1 - \lambda( a^k ) \right) \left\| T(a^k) - a^k \right\|^2 \leq 0,
\end{equation*}
and this concludes the proof. \qed
\end{proof}

The next technical lemma will help us to save computations in other lemmas. 
\begin{lemma} 
\label{lemma: norm of q - aj}
If $x \in \Omega$, $\mathcal{A}$ is a subset of $\left\{ 1, \ldots, n \right\}$ and $j \in \left\{ 1, \ldots, m \right\}$, then:
\begin{equation*}
\left\| Q(x) - a^j \right\|_{\mathcal{A}}^2 = \left\| x - a^j \right\|_{\mathcal{A}}^2 - \left\| Q(x) - x \right\|_{\mathcal{A}}^2 + 2 \left\langle Q(x) - x, Q(x) - a^j \right\rangle_{\mathcal{A}}. 
\end{equation*}
\end{lemma}

\begin{proof}
If $x \in \Omega$, we have:
\begin{eqnarray*}
\| Q(x) - a^j \|_{\mathcal{A}}^2 & = & \langle Q(x) - a^j , Q(x) - a^j \rangle_{\mathcal{A}} 
\\
& = & \langle Q(x) - x + x - a^j , Q(x) - x + x - a^j \rangle_{\mathcal{A}} 
\\
& = & \| Q(x) - x \|_{\mathcal{A}}^2 + \| x - a^j \|_{\mathcal{A}}^2 + 2 \langle Q(x) - x , x - a^j \rangle_{\mathcal{A}} 
\\
& = & \| Q(x) - x \|_{\mathcal{A}}^2 + \| x - a^j \|_{\mathcal{A}}^2 + 2 \langle Q(x) - x , x - Q(x) \rangle_{\mathcal{A}} 
\\
& + & 2 \langle Q(x) - x , Q(x) - a^j \rangle_{\mathcal{A}} 
\\
& = & \| x - a^j \|_{\mathcal{A}}^2 - \| Q(x) - x \|_{\mathcal{A}}^2 + 2 \langle Q(x) - x , Q(x) - a^j \rangle_{\mathcal{A}}.
\end{eqnarray*}
\qed
\end{proof}

If $x \in \Omega$, let us define $g_x : \mathbbm{R}^n \rightarrow \mathbbm{R}$ by:
\begin{equation} 
\label{equation: definition of gx}
g_x(y) = \left\{ \begin{array}{ll} \displaystyle{\sum_{j=1}^m} \frac{ w_j }{ 2 \| x - a^j \| } \| E_x(y) - a^j \|^2, & \text{if $x \neq a^1, \ldots, a^m$}, \\ & \\ \displaystyle{ \sum_{\substack{j=1 \\j \neq k}}^m} \frac{ w_j }{ 2 \| a^k - a^j \| } \| y - a^j \|^2 + w_k \| y - a^k \|, & \text{if $x = a^k$}, \\ & \text{$k = 1, \ldots, m$}. \end{array} \right.
\end{equation}
The values that $g_x$ assumes at $x$ and $Q(x)$ will play an important role in the proof of a property of the objective function $f$.

\begin{lemma} 
\label{lemma: property of gx(x)}
Let $x \in \Omega$ be.
\begin{enumerate}
\item[(a)] If $x \neq a^1, \ldots, a^m$ then:
\begin{eqnarray*}
g_x(x) & = & \frac{1}{2} f(x) + 2 A(x) \left\langle Q(x) - x , Q(x) - T(x) \right\rangle
\\
& - & A(x) \left\| Q(x) - x \right\|_{\mathcal{N}(x)}^2.
\end{eqnarray*}
\item[(b)] If $x = a^k$ for some $k = 1, \ldots, m$, then $g_{a^k}(a^k) = \frac{1}{2} f(a^k)$. 
\end{enumerate}
\end{lemma}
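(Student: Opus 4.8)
The plan is to treat the two cases separately, exactly matching the two branches in the definition of $g_x$ and of $E_x$.

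For part (b), when $x = a^k$, the function $E_{a^k}$ does not appear: by definition $g_{a^k}(y) = \sum_{j \neq k} \frac{w_j}{2\|a^k - a^j\|}\|y - a^j\|^2 + w_k\|y - a^k\|$. I would simply evaluate at $y = a^k$. The last term $w_k\|a^k - a^k\| = 0$ vanishes, and each remaining term becomes $\frac{w_j}{2\|a^k - a^j\|}\|a^k - a^j\|^2 = \frac{w_j}{2}\|a^k - a^j\|$. Summing over $j \neq k$ gives $\frac12 \sum_{j \neq k} w_j\|a^k - a^j\| = \frac12 f(a^k)$, since the $j = k$ term of $f(a^k)$ is zero. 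This is a one-line computation.

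For part (a), when $x \neq a^1,\dots,a^m$, the starting point is $g_x(x) = \sum_{j=1}^m \frac{w_j}{2\|x-a^j\|}\|E_x(x) - a^j\|^2$. The key structural observation is how $E_x(x)$ decomposes across the index split $\{1,\dots,n\} = \mathcal{N}(x) \cup \mathcal{E}(x)$: by the definition (\ref{equation: definition of ex}), $(E_x(x))_k = (Q(x))_k$ for $k \in \mathcal{N}(x)$ and $(E_x(x))_k = x_k$ for $k \in \mathcal{E}(x)$. Therefore, using (\ref{equation: property norma}),
\begin{equation*}
\|E_x(x) - a^j\|^2 = \|Q(x) - a^j\|_{\mathcal{N}(x)}^2 + \|x - a^j\|_{\mathcal{E}(x)}^2.
\end{equation*}
Now I would apply Lemma \ref{lemma: norm of q - aj} with $\mathcal{A} = \mathcal{N}(x)$ to rewrite $\|Q(x) - a^j\|_{\mathcal{N}(x)}^2 = \|x - a^j\|_{\mathcal{N}(x)}^2 - \|Q(x) - x\|_{\mathcal{N}(x)}^2 + 2\langle Q(x) - x, Q(x) - a^j\rangle_{\mathcal{N}(x)}$. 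Since on $\mathcal{E}(x)$ we have $(Q(x))_k = (T(x))_k$ and, more to the point, $(Q(x) - x)_k$ need not vanish there — but observe that $\langle Q(x) - x, Q(x) - a^j\rangle_{\mathcal{E}(x)}$ is exactly what is needed to complete the $\mathcal{N}$-inner-product into a full inner product. Concretely, combining the two pieces and then recombining $\|x-a^j\|_{\mathcal{N}(x)}^2 + \|x-a^j\|_{\mathcal{E}(x)}^2 = \|x-a^j\|^2$ via (\ref{equation: property norma}), I expect
\begin{equation*}
\|E_x(x) - a^j\|^2 = \|x - a^j\|^2 - \|Q(x) - x\|_{\mathcal{N}(x)}^2 + 2\langle Q(x) - x, Q(x) - a^j\rangle_{\mathcal{N}(x)}.
\end{equation*}
The subtle point — and the step I expect to be the main obstacle — is the handling of the $\mathcal{E}(x)$ contribution: one must check that $\|x - a^j\|_{\mathcal{E}(x)}^2$ can be freely augmented to a full-norm term without introducing a spurious $\langle\cdot,\cdot\rangle_{\mathcal{E}(x)}$ correction, i.e. that no inner-product term over $\mathcal{E}(x)$ survives. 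This works precisely because $(Q(x) - x)_k$ enters the final inner product only through $\mathcal{N}(x)$; on $\mathcal{E}(x)$, $E_x(x)$ and $x$ agree coordinatewise, so no cross term appears there at all. I would write this out carefully using (\ref{equation: property innerproda}) to split and recombine.

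Finally, multiply by $\frac{w_j}{2\|x-a^j\|}$ and sum over $j = 1,\dots,m$. The first term gives $\sum_j \frac{w_j}{2\|x-a^j\|}\|x-a^j\|^2 = \frac12 f(x)$. The second term gives $-\|Q(x)-x\|_{\mathcal{N}(x)}^2 \sum_j \frac{w_j}{2\|x-a^j\|} = -A(x)\|Q(x)-x\|_{\mathcal{N}(x)}^2$, using the definition (\ref{equation: weight function}) of $A$. For the third term, $\sum_j \frac{w_j}{\|x-a^j\|}\langle Q(x)-x, Q(x)-a^j\rangle_{\mathcal{N}(x)} = \langle Q(x)-x, \sum_j \frac{w_j}{\|x-a^j\|}(Q(x)-a^j)\rangle_{\mathcal{N}(x)} = \langle Q(x)-x, \alpha(x)\rangle_{\mathcal{N}(x)}$ by (\ref{equation: definition of alpha1}), and by Lemma \ref{lemma: properties of alpha} this equals $2A(x)\langle Q(x)-x, Q(x)-T(x)\rangle_{\mathcal{N}(x)}$. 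To replace the partial inner product by the full one, note that by the definitions of $\mathcal{E}(x)$ and $Q$, $(Q(x)-T(x))_k = 0$ for every $k \in \mathcal{E}(x)$, so $\langle Q(x)-x, Q(x)-T(x)\rangle_{\mathcal{E}(x)} = 0$ and hence $\langle Q(x)-x, Q(x)-T(x)\rangle_{\mathcal{N}(x)} = \langle Q(x)-x, Q(x)-T(x)\rangle$. Assembling the three contributions yields the claimed identity, and the proof is complete. $\qed$
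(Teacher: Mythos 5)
Your proof is correct and follows essentially the same route as the paper: the same coordinate split of $\|E_x(x)-a^j\|^2$ over $\mathcal{N}(x)$ and $\mathcal{E}(x)$, the same application of Lemma \ref{lemma: norm of q - aj} with $\mathcal{A}=\mathcal{N}(x)$, and the same passage from $\langle Q(x)-x,\alpha(x)\rangle_{\mathcal{N}(x)}$ to the full inner product via Lemma \ref{lemma: properties of alpha} and the vanishing of $(Q(x)-T(x))_k$ on $\mathcal{E}(x)$. The ``subtle point'' you flag about the $\mathcal{E}(x)$ contribution is handled correctly and is in fact immediate, exactly as in the paper.
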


\begin{proof}
Let us suppose that $x \neq a^1, \ldots, a^m$. By property (\ref{equation: property norma}) and (\ref{equation: definition of ex}), we have for $j = 1, \ldots, m$:
\begin{equation*}
\left\| E_x(x) - a^j \right\|^2 = \left\| x - a^j \right\|_{\mathcal{E}(x)}^2 + \left\| Q(x) - a^j \right\|_{\mathcal{N}(x)}^2.
\end{equation*}
Using Lemma \ref{lemma: norm of q - aj}, we can see that:
\begin{eqnarray*}
g_x(x) & = & \sum_{j=1}^m \frac{ w_j }{ 2 \| x - a^j \| } \left[ \left\| x - a^j \right\|_{\mathcal{E}(x)}^2 + \left\| x - a^j \right\|_{\mathcal{N}(x)}^2 \right.
\\
& - & \left. \left\| Q(x) - x \right\|_{\mathcal{N}(x)}^2 + 2 \left\langle Q(x) - x , Q(x) - a^j \right\rangle_{\mathcal{N}(x)} \right].
\end{eqnarray*}
Due to (\ref{equation: property norma}), the definition of the Weber function $f$, the definition of $A$ as in (\ref{equation: weight function}), the property (\ref{equation: property cinnerproda}) and the definition of $\alpha$ as in (\ref{equation: definition of alpha1}), we obtain:
\begin{equation*}
g_x(x) = \frac{1}{2} f(x) - A(x) \| Q(x) - x \|_{\mathcal{N}(x)}^2 + \langle Q(x) - x , \alpha(x) \rangle_{\mathcal{N}(x)}.
\end{equation*}
By Lemma \ref{lemma: properties of alpha}, the fact that $\left( Q(x) \right)_i = \left( T(x) \right)_i$ for all $i \in \mathcal{E}(x)$ and (\ref{equation: property innerproda}), we get:
\begin{eqnarray*}
g_x(x) & = & \frac{1}{2} f(x) - A(x) \| Q(x) - x \|_{\mathcal{N}(x)}^2 
\\
& + & 2 A(x) \langle Q(x) - x , Q(x) - T(x) \rangle,
\end{eqnarray*}
which concludes the proof of (a).

Now, let us assume that $x = a^k$ for some $k = 1, \ldots, m$. Then:
\begin{equation*}
g_{a^k}(a^k) = \sum_{\substack{j=1 \\ j \neq k}}^m \frac{ w_j }{ 2 \| a^k - a^j \| } \| a^k - a^j \|^2 = \frac{1}{2} \sum_{\substack{j=1 \\ j \neq k}}^m w_j \| a^k - a^j \| = \frac{1}{2} f(a^k).
\end{equation*}
This concludes the proof of (b). \qed
\end{proof}

The number $g_x(Q(x))$ can be computed in the next lemma. 

\begin{lemma} 
\label{lemma: property of gx(qx)}
Let $x \in \Omega$ be.
\begin{enumerate}
\item[(a)] If $x \neq a^1, \ldots, a^m$ then:
\begin{eqnarray*}
g_x(Q(x)) & = & \frac{1}{2} f(x) + 2 A(x) \left\langle Q(x) - x , Q(x) - T(x) \right\rangle 
\\
& - & A(x) \left\| Q(x) - x \right\|^2.
\end{eqnarray*}
\item[(b)] If $x = a^k$ for some $k = 1, \ldots, m$, then
\begin{eqnarray*}
g_{a^k}(Q(a^k)) & = & \frac{1}{2} f(a^k) - A(a^k) \left\| Q(a^k) - a^k \right\|^2
\\
& + & 2 A(a^k) \left\langle Q(a^k) - a^k , Q(a^k) - T(a^k) \right\rangle
\\
& - & 2 \beta(a^k) A(a^k) \left\langle Q(a^k) - a^k , \widetilde{T}(a^k) - a^k \right\rangle 
\\
& + & w_k \left\| Q(a^k) - a^k \right\|.
\end{eqnarray*}
\end{enumerate}
\end{lemma}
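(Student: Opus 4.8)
The plan is to expand $g_x(Q(x))$ straight from the definition (\ref{equation: definition of gx}) and then reduce every term, by means of Lemma~\ref{lemma: norm of q - aj}, to the three quantities $f(x)$, $\|Q(x)-x\|^2$ and $\langle Q(x)-x,Q(x)-T(x)\rangle$ that appear on the right-hand sides; the function $\alpha$ and Lemma~\ref{lemma: properties of alpha} are what convert the cross term into $2A(x)\langle Q(x)-x,Q(x)-T(x)\rangle$. The two cases differ only in how the cross term relates to $\alpha$.

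For part (a), assume $x\neq a^1,\ldots,a^m$. The key simplification is Remark~\ref{remark: ex o q = q}, which gives $E_x(Q(x))=Q(x)$, so that
\begin{equation*}
g_x(Q(x))=\sum_{j=1}^m\frac{w_j}{2\|x-a^j\|}\,\|Q(x)-a^j\|^2 .
\end{equation*}
Then I would apply Lemma~\ref{lemma: norm of q - aj} with $\mathcal{A}=\{1,\ldots,n\}$ to each $\|Q(x)-a^j\|^2$ and split the resulting sum into three pieces. The first collapses to $\tfrac12 f(x)$ by (\ref{equation: weber function}); the second, after collecting the common factor $\|Q(x)-x\|^2$, equals $-A(x)\|Q(x)-x\|^2$ by (\ref{equation: weight function}); the third is $\langle Q(x)-x,\sum_{j}\frac{w_j}{\|x-a^j\|}(Q(x)-a^j)\rangle=\langle Q(x)-x,\alpha(x)\rangle$ by (\ref{equation: definition of alpha1}), and Lemma~\ref{lemma: properties of alpha} turns this into $2A(x)\langle Q(x)-x,Q(x)-T(x)\rangle$. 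Adding the three pieces yields exactly the claimed identity.

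For part (b), with $x=a^k$, the vertex branch of (\ref{equation: definition of gx}) already isolates the term $w_k\|Q(a^k)-a^k\|$, so it remains to treat $\sum_{j\neq k}\frac{w_j}{2\|a^k-a^j\|}\|Q(a^k)-a^j\|^2$. Again I would invoke Lemma~\ref{lemma: norm of q - aj} with $\mathcal{A}=\{1,\ldots,n\}$, producing $\tfrac12 f(a^k)$, the term $-A(a^k)\|Q(a^k)-a^k\|^2$ (now using the vertex branch of (\ref{equation: weight function})), and a cross term $\langle Q(a^k)-a^k,S\rangle$ with $S=\sum_{j\neq k}\frac{w_j}{\|a^k-a^j\|}(Q(a^k)-a^j)$. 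The main obstacle is that, unlike in part (a), this sum $S$ is \emph{not} $\alpha(a^k)$: the definition (\ref{equation: definition of alpha2}) carries the weighted point $(1-\beta(a^k))a^j+\beta(a^k)a^k$ in place of $a^j$. The remedy is to add and subtract appropriately and write $S=\alpha(a^k)-\beta(a^k)\,\widetilde{R}(a^k)$, recognizing $\sum_{j\neq k}\frac{w_j(a^j-a^k)}{\|a^k-a^j\|}=\widetilde{R}(a^k)$ from (\ref{equation: definition of widetilder}). Substituting $\alpha(a^k)=2A(a^k)[Q(a^k)-T(a^k)]$ (Lemma~\ref{lemma: properties of alpha}) and $\widetilde{R}(a^k)=2A(a^k)[\widetilde{T}(a^k)-a^k]$ (Lemma~\ref{lemma: widetilder equal to 2A(widetildet - x)}) into $\langle Q(a^k)-a^k,S\rangle$ then produces the two inner-product terms of the statement, and collecting all contributions together with the previously isolated $w_k\|Q(a^k)-a^k\|$ completes the proof. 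Apart from this bookkeeping step in (b), the argument is a routine computation.
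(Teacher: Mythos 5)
Your proposal is correct and follows essentially the same route as the paper: Remark~\ref{remark: ex o q = q} plus Lemma~\ref{lemma: norm of q - aj} to expand $g_x(Q(x))$, with the cross term identified via $\alpha$ and Lemma~\ref{lemma: properties of alpha}. In part (b) your rewriting $S=\alpha(a^k)-\beta(a^k)\widetilde{R}(a^k)$ is exactly the paper's add-and-subtract step $Q(a^k)-a^j=Q(a^k)-(1-\beta(a^k))a^j-\beta(a^k)a^k+\beta(a^k)(a^k-a^j)$ summed over $j$, so the two arguments coincide.
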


\begin{proof}
First, let us consider $x \neq a^1, \ldots, a^m$. Due to Remark \ref{remark: ex o q = q} we have:
\begin{equation*}
g_x(Q(x)) = \sum_{j=1}^m \frac{ w_j }{ 2 \| x - a^j \| } \| Q(x) - a^j \|^2.
\end{equation*}
By Lemma \ref{lemma: norm of q - aj} we obtain:
\begin{eqnarray*}
g_x(Q(x))& = & \sum_{j=1}^m \frac{ w_j }{ 2 \| x - a^j \| } \left[ \| x - a^j \|^2 - \| Q(x) - x \|^2 \right.
\\
& + & \left. 2 \langle Q(x) - x , Q(x) - a^j \rangle \right].
\end{eqnarray*}
Due to the definition of the Weber function $f$, the definition of $A$ as in (\ref{equation: weight function}) and the definition of $\alpha$ as in (\ref{equation: definition of alpha1}), we deduce that:
\begin{equation*}
g_x(Q(x)) = \frac{1}{2} f(x) - A(x) \| Q(x) - x \|^2 + \langle Q(x) - x , \alpha(x) \rangle.
\end{equation*}
By Lemma \ref{lemma: properties of alpha} we get:
\begin{equation*}
g_x(Q(x)) = \frac{1}{2} f(x) - A(x) \| Q(x) - x \|^2 + 2 A(x) \langle Q(x) - x , Q(x) - T(x) \rangle,
\end{equation*}
concluding the proof of (a).

Now, consider $x = a^k$ for some $k = 1, \ldots, m$. Due to (\ref{equation: definition of gx}) we have:
\begin{equation*}
g_{a^k}(Q(a^k)) = \sum_{\substack{j=1 \\ j \neq k}}^m \frac{ w_j }{ 2 \| a^k - a^j \| } \| Q(a^k) - a^j \|^2 + w_k \| Q(a^k) - a^k \|.
\end{equation*}
By Lemma \ref{lemma: norm of q - aj}, the definition of the Weber function $f$ and the definition of $A$ as in (\ref{equation: weight function}) we obtain:
\begin{eqnarray*}
g_{a^k}\left( Q(a^k) \right) & = & \frac{1}{2} f(a^k) - A(a^k) \| Q(a^k) - a^k \|^2 
\\
& + & \left\langle Q(a^k) - a^k , \sum_{\substack{j=1 \\ j \neq k}}^m \frac{ w_j }{ \| a^k - a^j \| } \left[ Q(a^k) - a^j \right] \right\rangle 
\\
& + & w_k \| Q(a^k) - a^k \|.
\end{eqnarray*}
Manipulating algebraically, 
\begin{equation*}
Q(a^k) - a^j = Q(a^k) - ( 1 - \beta(a^k) ) a^j - \beta(a^k) a^k + \beta(a^k) ( a^k - a^j ).
\end{equation*}
Due to the definition of $\alpha$ (see (\ref{equation: definition of alpha2})) and the definition of $\widetilde{R}$ (see (\ref{equation: definition of widetilder})) we get:
\begin{eqnarray*}
g_{a^k} \left( Q(a^k) \right)& = & \frac{1}{2} f(a^k) - A(a^k) \left\| Q(a^k) - a^k \right\|^2 
\\
& + & \left\langle Q(a^k) - a^k , \alpha(a^k) \right\rangle - \beta(a^k) \left\langle Q(a^k) - a^k , \widetilde{R}(a^k) \right\rangle
\\ 
& + & w_k \| Q(a^k) - a^k \|.
\end{eqnarray*}
By Lemma \ref{lemma: widetilder equal to 2A(widetildet - x)} and Lemma \ref{lemma: properties of alpha} we have:
\begin{eqnarray*}
g_{a^k} \left( Q(a^k) \right) & = & \frac{1}{2} f(a^k) - A(a^k) \left\| Q(a^k) - a^k \right\|^2 
\\
& + & 2 A(a^k) \left\langle Q(a^k) - a^k , Q(a^k) - T(a^k) \right\rangle 
\\
& - & 2 A(a^k) \beta(a^k) \left\langle Q(a^k) - a^k , \widetilde{T}(a^k) - a^k \right\rangle + w_k \| Q(a^k) - a^k \|.
\end{eqnarray*}
which concludes the proof. \qed
\end{proof}

The next lemma deals with the last two terms of $g_{a^k}(Q(a^k))$.
\begin{lemma} 
\label{lemma: property of something equal to zero}
If $a^k \in \Omega$ for some $k = 1, \ldots, m$, the number
\begin{equation*}
z = w_k \left\| Q(a^k) - a^k \right\| - 2 A(a^k) \beta(a^k) \left\langle Q(a^k) - a^k , \widetilde{T}(a^k) - a^k \right\rangle,
\end{equation*}
is equal to zero.
\end{lemma}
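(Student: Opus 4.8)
The plan is to write $z$ as a product of a handful of scalar factors, at least one of which must vanish in each of the cases appearing in the definition of $\beta$. The key structural fact is that, at a vertex, the increments $Q(a^k)-a^k$, $T(a^k)-a^k$ and $\widetilde{T}(a^k)-a^k$ are all parallel: by Remark~\ref{remark: properties of Q}(b) we have $Q(a^k)-a^k=(1-\lambda(a^k))(T(a^k)-a^k)$, and by Remark~\ref{remark: properties of beta}(c) we have $T(a^k)-a^k=(1-\beta(a^k))(\widetilde{T}(a^k)-a^k)$. Composing these, $Q(a^k)-a^k=c\,(\widetilde{T}(a^k)-a^k)$ with $c:=(1-\lambda(a^k))(1-\beta(a^k))\ge 0$, where nonnegativity uses $\lambda(a^k)\in[0,1]$ (Lemma~\ref{lemma: properties of lambda(a^k)}(a)) together with $0\le\beta(a^k)\le 1$ (since $\beta=\min\{1,\gamma\}$ and $\gamma\ge 0$). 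Consequently $\|Q(a^k)-a^k\|=c\,\|\widetilde{T}(a^k)-a^k\|$ and $\langle Q(a^k)-a^k,\widetilde{T}(a^k)-a^k\rangle=c\,\|\widetilde{T}(a^k)-a^k\|^{2}$.

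Next I would substitute these two identities into the expression defining $z$ and factor out $c\,\|\widetilde{T}(a^k)-a^k\|$, obtaining $z=c\,\|\widetilde{T}(a^k)-a^k\|\,[\,w_k-\beta(a^k)\,(2A(a^k)\|\widetilde{T}(a^k)-a^k\|)\,]$. Now Lemma~\ref{lemma: widetilder equal to 2A(widetildet - x)} gives $2A(a^k)(\widetilde{T}(a^k)-a^k)=\widetilde{R}(a^k)$, so, using $A(a^k)>0$, we get $2A(a^k)\|\widetilde{T}(a^k)-a^k\|=\|\widetilde{R}(a^k)\|=r(a^k)$, and hence
\[
z=c\,\|\widetilde{T}(a^k)-a^k\|\,\bigl[\,w_k-\beta(a^k)\,r(a^k)\,\bigr].
\]

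Finally I would check that one factor vanishes in each case of the definition of $\beta$. If $r(a^k)=0$, then $\widetilde{R}(a^k)=0$, hence (again since $A(a^k)>0$) $\widetilde{T}(a^k)-a^k=0$, so $z=0$. If $r(a^k)\neq 0$, then $\gamma(a^k)=w_k/r(a^k)$: if $\gamma(a^k)\le 1$, then $\beta(a^k)=\gamma(a^k)=w_k/r(a^k)$, so $w_k-\beta(a^k)r(a^k)=0$ and $z=0$; if $\gamma(a^k)>1$, then $\beta(a^k)=1$, hence $c=0$ and $z=0$. This covers every possibility, so $z=0$. I do not anticipate a genuine difficulty here; the only delicate point is carrying the case split built into $\beta=\min\{1,\gamma\}$ (and into $\gamma$, via whether $r(a^k)=0$) cleanly through to the end, which the factored form above is designed to make transparent.
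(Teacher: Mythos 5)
Your proof is correct. The core computation is the one the paper uses --- exploit the collinearity of $Q(a^k)-a^k$, $T(a^k)-a^k$ and $\widetilde{T}(a^k)-a^k$ to reduce $z$ to a product of scalars, then invoke the definition of $\beta$ --- but you organize the degenerate cases differently, and in a way that makes the argument more self-contained. The paper first disposes of $Q(a^k)=a^k$, then proves $a^k\neq x^u$ by contradiction (via Theorem~\ref{theorem: main result of modified weiszfeld algorithm} and Remark~\ref{remark: properties of Q}) in order to invoke Remark~\ref{remark: properties of beta}(b) and conclude $\beta(a^k)\in(0,1)$; only then is the substitution $\widetilde{T}(a^k)-a^k=(T(a^k)-a^k)/(1-\beta(a^k))$ legitimate, and the bracket collapses to $\beta(a^k)-\beta(a^k)$. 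You never divide by $1-\beta(a^k)$: by composing Remark~\ref{remark: properties of Q}(b) with Remark~\ref{remark: properties of beta}(c) you get the single nonnegative scalar $c=(1-\lambda(a^k))(1-\beta(a^k))$ and the factorization $z=c\,\|\widetilde{T}(a^k)-a^k\|\,[\,w_k-\beta(a^k)r(a^k)\,]$, after which the three branches of the definitions of $\gamma$ and $\beta=\min\{1,\gamma\}$ each kill one factor ($\|\widetilde{T}(a^k)-a^k\|=0$ when $r(a^k)=0$; the bracket when $\gamma\le 1$; $c$ when $\beta=1$). What your route buys is independence from Theorem~\ref{theorem: main result of modified weiszfeld algorithm} and from the Vardi--Zhang fact quoted in Remark~\ref{remark: properties of beta}(b): only the raw definitions and the two remarks on increments are needed. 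The paper's route is slightly shorter once those facts are granted, since it reduces everything to the single nondegenerate case. Both are valid; no gap in yours.
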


\begin{proof}
If $Q(a^k) = a^k$ the result is true. So, from now on, let us consider that $Q(a^k) \neq a^k$. First, let us check that $a^k \neq x^u$. In case that $a^k = x^u$, then $T(a^k) = a^k$ by Theorem \ref{theorem: main result of modified weiszfeld algorithm}. Since $a^k \in \Omega$, then $T(a^k) \in \Omega$. By Remark \ref{remark: properties of Q} we have that $Q(a^k) = T(a^k) = a^k$ which is a contradiction.

By Remark \ref{remark: properties of beta}, we have that $\beta(a^k) \in (0,1)$ (since $a^k \neq x^u$) and: 
\begin{equation*}
z = w_k \left\| Q(a^k) - a^k \right\| - \frac{ 2 A(a^k) \beta(a^k) }{ 1 - \beta(a^k) }  \left\langle Q(a^k) - a^k , T(a^k) - a^k \right\rangle.
\end{equation*}
Extracting common factors, using Remarks \ref{remark: properties of beta} and \ref{remark: properties of Q}, the fact that $T(a^k) \neq a^k$ (if $T(a^k) = a^k$ then $a^k = x^u$ by Theorem \ref{theorem: main result of modified weiszfeld algorithm}), and the fact that $\widetilde{T}(a^k) \neq a^k$ (if $\widetilde{T}(a^k) = a^k$ then $T(a^k) = a^k$ by definition (\ref{equation: iteration function of the modified algorithm with beta})) we get that:
\begin{eqnarray*}
z & = & 2 A(a^k) \left\| Q(a^k) - a^k \right\| \left\| \widetilde{T}(a^k) - a^k \right\| \left[ \frac{ w_k }{ 2 A(a^k) \left\| \widetilde{T}(a^k) - a^k \right\| } \right.
\\
& - & \left. \beta(a^k) \left\langle \frac{ ( 1 - \lambda( a^k ) ) ( T(a^k) - a^k ) }{ \left\| ( 1 - \lambda( a^k ) ) ( T(a^k) - a^k ) \right\| } , \frac{ T(a^k) - a^k }{ \left\| T(a^k) - a^k \right\| } \right\rangle \right].
\end{eqnarray*}
Simplifying and using the definition of $\beta(a^k)$ we have that:
\begin{equation*}
z = 2 A(a^k) \left\| Q(a^k) - a^k \right\| \left\| \widetilde{T}(a^k) - a^k \right\| \left[ \beta(a^k) - \beta(a^k) \right] = 0,
\end{equation*}
which concludes the proof. \qed
\end{proof}

The purpose of the next two lemmas is to determine a strict inequality between the functions $g_x$ and $f$ at suitable points. First of all, we have to prove the following result.

\begin{lemma} 
\label{lemma: comparison between gxqx and gxx}
Let $x \in \Omega$ be such that $x \neq Q(x)$.
\begin{enumerate}
\item[(a)] If $x \neq a^1, \ldots, a^m$, then $g_x(Q(x)) \leq g_x(x)$. Besides that, if $\mathcal{E}(x) \neq \emptyset$ and $P_x \circ Q(x) \neq P_x(x)$, then $g_x(Q(x)) < g_x(x)$.
\item[(b)] If $x = a^k$ for some $k = 1, \ldots, m$, then $g_{a^k}(Q(a^k)) < g_{a^k}(a^k)$. 
\end{enumerate}
\end{lemma}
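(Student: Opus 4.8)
The plan is to compare the closed-form expressions for $g_x(Q(x))$ and $g_x(x)$ that were established in Lemmas~\ref{lemma: property of gx(qx)} and~\ref{lemma: property of gx(x)}, and then exploit Lemma~\ref{lemma: property of Q} (the projection inequality $\langle Q(x) - x, Q(x) - T(x) \rangle \leq 0$) together with Lemma~\ref{lemma: property of something equal to zero} at the vertices.

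For part (a), when $x \neq a^1, \ldots, a^m$, subtracting the formula in Lemma~\ref{lemma: property of gx(x)} from that in Lemma~\ref{lemma: property of gx(qx)} gives
\begin{equation*}
g_x(Q(x)) - g_x(x) = -A(x)\,\|Q(x) - x\|^2 + A(x)\,\|Q(x) - x\|_{\mathcal{N}(x)}^2 = -A(x)\,\|Q(x) - x\|_{\mathcal{E}(x)}^2,
\end{equation*}
using the orthogonal decomposition $\|Q(x)-x\|^2 = \|Q(x)-x\|_{\mathcal{N}(x)}^2 + \|Q(x)-x\|_{\mathcal{E}(x)}^2$ from~(\ref{equation: property norma}). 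Since $A(x) > 0$, this difference is $\leq 0$, giving $g_x(Q(x)) \leq g_x(x)$; and if $\mathcal{E}(x) \neq \emptyset$ with $P_x \circ Q(x) \neq P_x(x)$, then $\|Q(x)-x\|_{\mathcal{E}(x)}^2 > 0$ (the components of $Q(x)-x$ indexed by $\mathcal{E}(x)$ are exactly the entries of $P_x(Q(x)) - P_x(x)$), so the inequality is strict.

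For part (b), with $x = a^k \in \Omega$ and $a^k \neq Q(a^k)$, I would use Lemma~\ref{lemma: property of gx(qx)}(b) for $g_{a^k}(Q(a^k))$ and Lemma~\ref{lemma: property of gx(x)}(b), which gives simply $g_{a^k}(a^k) = \tfrac12 f(a^k)$. Their difference, after cancelling the two terms $w_k\|Q(a^k)-a^k\| - 2\beta(a^k)A(a^k)\langle Q(a^k)-a^k, \widetilde{T}(a^k)-a^k\rangle$ which vanish by Lemma~\ref{lemma: property of something equal to zero}, reduces to
\begin{equation*}
g_{a^k}(Q(a^k)) - g_{a^k}(a^k) = -A(a^k)\,\|Q(a^k) - a^k\|^2 + 2A(a^k)\,\langle Q(a^k) - a^k, Q(a^k) - T(a^k)\rangle.
\end{equation*}
The second term is $\leq 0$ by Lemma~\ref{lemma: property of Q}, and the first term is strictly negative since $A(a^k) > 0$ and $Q(a^k) \neq a^k$ by hypothesis; hence $g_{a^k}(Q(a^k)) < g_{a^k}(a^k)$.

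The computations here are genuinely routine once the earlier lemmas are in place; the only point requiring care is the bookkeeping in part (a) — recognizing that the gap is precisely $-A(x)\|Q(x)-x\|_{\mathcal{E}(x)}^2$ and that this quantity captures exactly the discrepancy $P_x\circ Q(x) \neq P_x(x)$ on the coordinate block $\mathcal{E}(x)$. I expect the main obstacle, if any, to be purely notational: keeping the index sets $\mathcal{N}(x)$ and $\mathcal{E}(x)$ and the partial norms $\|\cdot\|_{\mathcal{A}}$ straight while invoking~(\ref{equation: property norma}).
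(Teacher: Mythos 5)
Your proof is correct and follows essentially the same route as the paper: in part (a) the cross terms cancel and the gap is exactly $-A(x)\|Q(x)-x\|_{\mathcal{E}(x)}^2$, and in part (b) the paper likewise combines Lemmas \ref{lemma: property of gx(x)}, \ref{lemma: property of gx(qx)} and \ref{lemma: property of something equal to zero} and then invokes Lemma \ref{lemma: property of Q} together with $A(a^k)>0$ and $Q(a^k)\neq a^k$ to get strictness. No gaps.
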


\begin{proof}
If $x \neq a^1, \ldots, a^m$, then $g_x(Q(x)) - g_x(x) = - A(x) \| Q(x) - x \|_{\mathcal{E}(x)}^2 \leq 0$, by Lemma \ref{lemma: property of gx(x)} and Lemma \ref{lemma: property of gx(qx)}. Besides that, if $\mathcal{E}(x) \neq \emptyset$ and $P_x \circ Q(x) \neq P_x(x)$ we deduce that $\| Q(x) - x \|_{\mathcal{E}(x)} \neq 0$. Thus, $g_x(Q(x)) < g_x(x)$.

If $x = a^k$ for some $k = 1, \ldots, m$, by Lemmas \ref{lemma: property of gx(x)}, \ref{lemma: property of gx(qx)} and \ref{lemma: property of something equal to zero} we have:
\begin{eqnarray*}
g_{a^k}(Q(a^k)) - g_{a^k}(a^k) & = & - A(a^k) \left\| Q(a^k) - a^k \right\|^2 
\\
& + & 2 A(a^k) \left\langle Q(a^k) - a^k , Q(a^k) - T(a^k) \right\rangle.
\end{eqnarray*}
Due to Lemma \ref{lemma: property of Q} and the fact that $A > 0$ we obtain:
\begin{equation*}
g_{a^k}(Q(a^k)) - g_{a^k}(a^k) \leq - A(a^k) \| Q(a^k) - a^k \|^2 < 0,
\end{equation*}
and the proof is finished. \qed
\end{proof}

\begin{lemma} 
\label{lemma: property gxqx < fx/2}
Let $x \in \Omega$ be such that $x \neq Q(x)$. Then $g_x(Q(x)) < \frac{1}{2} f(x)$.
\end{lemma}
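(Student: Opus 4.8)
The plan is to split the argument into the two cases already used throughout this section: the non-vertex case and the vertex case, and in each case combine the explicit formula for $g_x(Q(x))$ from Lemma~\ref{lemma: property of gx(qx)} with the sign information coming from Lemma~\ref{lemma: property of Q} and the strict positivity of $A$.

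\emph{Non-vertex case.} Suppose $x \neq a^1, \ldots, a^m$. By Lemma~\ref{lemma: property of gx(qx)}(a),
\begin{equation*}
g_x(Q(x)) = \frac{1}{2} f(x) + 2 A(x) \left\langle Q(x) - x , Q(x) - T(x) \right\rangle - A(x) \left\| Q(x) - x \right\|^2.
\end{equation*}
Lemma~\ref{lemma: property of Q} gives $\left\langle Q(x) - x , Q(x) - T(x) \right\rangle \leq 0$, and since $x \neq Q(x)$ we have $\left\| Q(x) - x \right\|^2 > 0$; combined with $A(x) > 0$ this forces the last two terms to be strictly negative, so $g_x(Q(x)) < \frac{1}{2} f(x)$.

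\emph{Vertex case.} Suppose $x = a^k \in \Omega$. By Lemma~\ref{lemma: property of gx(qx)}(b) together with Lemma~\ref{lemma: property of something equal to zero} (which kills the term $w_k \|Q(a^k) - a^k\| - 2\beta(a^k) A(a^k) \langle Q(a^k) - a^k, \widetilde{T}(a^k) - a^k\rangle$), we obtain
\begin{equation*}
g_{a^k}(Q(a^k)) = \frac{1}{2} f(a^k) - A(a^k) \left\| Q(a^k) - a^k \right\|^2 + 2 A(a^k) \left\langle Q(a^k) - a^k , Q(a^k) - T(a^k) \right\rangle.
\end{equation*}
Again Lemma~\ref{lemma: property of Q} makes the inner-product term nonpositive, and $x \neq Q(x)$ makes $-A(a^k)\|Q(a^k) - a^k\|^2$ strictly negative, giving $g_{a^k}(Q(a^k)) < \frac{1}{2} f(a^k)$. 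This is essentially the same chain of inequalities already displayed inside the proof of Lemma~\ref{lemma: comparison between gxqx and gxx}(b), so the vertex case can even be quoted from there.

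I do not anticipate a genuine obstacle here: the hard work has been front-loaded into Lemmas~\ref{lemma: property of gx(qx)} and~\ref{lemma: property of something equal to zero}, and this statement is just the observation that, once $x \neq Q(x)$, the subtracted squared-norm term is strictly positive while the projection inner-product term never has the wrong sign. The only point requiring a little care is making sure the strictness comes from $\|Q(x) - x\|^2 > 0$ alone (it does, so no assumption like $\mathcal{E}(x) \neq \emptyset$ is needed), and noting that in the vertex case Lemma~\ref{lemma: property of something equal to zero} applies because $a^k \in \Omega$ is exactly its hypothesis.
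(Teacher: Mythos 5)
Your proof is correct, but in the non-vertex case it takes a genuinely shorter route than the paper. The paper does not use Lemma~\ref{lemma: property of gx(qx)}(a) here at all: it instead bounds $g_x(Q(x)) \leq g_x(x)$ via Lemma~\ref{lemma: comparison between gxqx and gxx}, substitutes the formula for $g_x(x)$ from Lemma~\ref{lemma: property of gx(x)}(a), and is then left with the term $-A(x)\|Q(x)-x\|_{\mathcal{N}(x)}^2$, whose strict negativity is not automatic; this forces a three-way subcase analysis on whether $\mathcal{E}(x)=\emptyset$ and whether $P_x\circ Q(x)=P_x(x)$, with the strictness coming sometimes from the restricted norm and sometimes from the strict inequality in Lemma~\ref{lemma: comparison between gxqx and gxx}(a). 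You bypass all of that by plugging in the exact expression for $g_x(Q(x))$ from Lemma~\ref{lemma: property of gx(qx)}(a), where the subtracted term carries the \emph{full} norm $\|Q(x)-x\|^2$, so $x\neq Q(x)$ alone yields strictness once Lemma~\ref{lemma: property of Q} disposes of the inner-product term. This is a real simplification: it renders Lemmas~\ref{lemma: property of gx(x)} and~\ref{lemma: comparison between gxqx and gxx} unnecessary for this particular statement (they are still used elsewhere, e.g.\ in Theorem~\ref{theorem: descent property}). Your vertex case is essentially the paper's argument with Lemma~\ref{lemma: comparison between gxqx and gxx}(b) unfolded, and your observation that Lemma~\ref{lemma: property of something equal to zero} applies because $a^k\in\Omega$ is exactly right.
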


\begin{proof}
Let us consider the case when $x \neq a^1, \ldots, a^m$. By Lemmas \ref{lemma: property of Q}, \ref{lemma: property of gx(x)} and \ref{lemma: comparison between gxqx and gxx} we have that:
\begin{eqnarray*}
g_x(Q(x)) & \leq & g_x(x) = \frac{1}{2} f(x) + 2 A(x) \left\langle Q(x) - x , Q(x) - T(x) \right\rangle 
\\
& - & A(x) \left\| Q(x) - x \right\|_{\mathcal{N}(x)}^2
\\
& \leq & \frac{1}{2} f(x) - A(x) \left\| Q(x) - x \right\|_{\mathcal{N}(x)}^2.
\end{eqnarray*}
If $\mathcal{E}(x) = \emptyset$, then $\| \cdot \|_{\mathcal{N}(x)} = \| \cdot \|$. Therefore:
\begin{equation*}
g_x(Q(x)) \leq \frac{1}{2} f(x) - A(x) \| Q(x) - x \|^2 < \frac{1}{2} f(x).
\end{equation*}
If $\mathcal{E}(x) \neq \emptyset$ and $P_x \circ Q(x) = P_x(x)$, then there exists an index $i \in \mathcal{N}(x)$ such that $x_i \neq (Q(x))_i$ since $x \neq Q(x)$. Thus, $\| Q(x) - x \|_{\mathcal{N}(x)} \neq 0$, which implies:
\begin{equation*}
g_x(Q(x)) \leq \frac{1}{2} f(x) - A(x) \left\| Q(x) - x \right\|_{\mathcal{N}(x)}^2 < \frac{1}{2} f(x).
\end{equation*}
If $\mathcal{E}(x) \neq \emptyset$ and $P_x \circ Q(x) \neq P_x(x)$, due to Lemmas \ref{lemma: property of Q}, \ref{lemma: property of gx(x)} and \ref{lemma: comparison between gxqx and gxx}, we have that:
\begin{equation*}
g_x(Q(x)) < g_x(x) \leq \frac{1}{2} f(x) - A(x) \left\| Q(x) - x \right\|_{\mathcal{N}(x)}^2 \leq \frac{1}{2} f(x).
\end{equation*}

Now, when $x = a^k$ for some $k = 1, \ldots, m$, $g_{a^k}(Q(a^k)) < g_{a^k}(a^k) = \frac{1}{2} f(a^k)$ due to Lemma \ref{lemma: property of gx(x)} and Lemma \ref{lemma: comparison between gxqx and gxx}. \qed
\end{proof}

The next lemma states an equality that relates the Weber function and $g_x$ at appropriate points when $x \neq a^1, \ldots, a^m$. Besides that, this result will be crucial in the next section.

\begin{lemma} 
\label{lemma: equality of gxqx}
Let $x \neq a^1, \ldots, a^m$ be such that $x \in \Omega$ and $x \neq Q(x)$. Then:
\begin{equation*}
g_x \circ Q(x) = \frac{1}{2} f(x) + \left( f(Q(x)) - f(x) \right) + \delta, \quad \delta \geq 0.
\end{equation*}
\end{lemma}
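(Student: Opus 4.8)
The plan is to exhibit $\delta$ explicitly as a finite sum of manifestly nonnegative terms. Since $x \neq a^1, \ldots, a^m$, Remark~\ref{remark: ex o q = q} gives $E_x(Q(x)) = Q(x)$, so from the definition~(\ref{equation: definition of gx}) of $g_x$ we have $g_x(Q(x)) = \sum_{j=1}^m \frac{w_j}{2\|x - a^j\|}\|Q(x) - a^j\|^2$. The natural candidate is then
\[
\delta \;:=\; g_x(Q(x)) - \frac{1}{2}f(x) - \bigl(f(Q(x)) - f(x)\bigr) \;=\; g_x(Q(x)) + \frac{1}{2}f(x) - f(Q(x)),
\]
and the whole statement reduces to checking that this quantity is $\geq 0$.

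First I would expand the three terms of $\delta$ as sums over $j = 1, \ldots, m$, using the definition of the Weber function $f$ and the formula for $g_x(Q(x))$ above, and regroup everything into a single sum over $j$. For each fixed $j$, abbreviating $p = \|Q(x) - a^j\|$ and $q = \|x - a^j\|$ (with $q > 0$ because $x \neq a^j$), the $j$-th summand equals
\[
\frac{w_j}{2q}\,p^2 + \frac{w_j}{2}\,q - w_j\,p \;=\; \frac{w_j}{2q}\bigl(p^2 - 2pq + q^2\bigr) \;=\; \frac{w_j\,(p - q)^2}{2q} \;\geq\; 0 .
\]
Summing over $j$ yields $\delta = \sum_{j=1}^m \frac{w_j\bigl(\|Q(x) - a^j\| - \|x - a^j\|\bigr)^2}{2\,\|x - a^j\|} \geq 0$, which is precisely the asserted decomposition $g_x\circ Q(x) = \frac{1}{2}f(x) + (f(Q(x)) - f(x)) + \delta$ with $\delta \geq 0$.

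There is essentially no real obstacle here: the only point requiring a word of care is that every denominator $\|x - a^j\|$ is strictly positive, which is guaranteed by the hypothesis $x \neq a^1, \ldots, a^m$ (this is also what makes $g_x(Q(x))$ and the individual pieces of $f$ above well defined); once that is noted, the termwise estimate is just the elementary inequality $(p - q)^2 \geq 0$. I would also point out that $\delta$ need not be strictly positive even though $x \neq Q(x)$, since a priori $Q(x)$ could be equidistant from every vertex; the statement only claims $\delta \geq 0$, so nothing further is needed.
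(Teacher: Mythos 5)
Your proof is correct and is essentially the paper's own argument run in reverse: the paper obtains the same $\delta = \sum_{j=1}^m \frac{w_j}{2\|x-a^j\|}\bigl(\|Q(x)-a^j\|-\|x-a^j\|\bigr)^2$ by adding and subtracting $\|x-a^j\|$ inside the square, while you define $\delta$ as the residual and complete the square termwise. Same decomposition, same nonnegativity argument; nothing further is needed.
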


\begin{proof}
Due to the definition of $g_x$ as in (\ref{equation: definition of gx}) and Remark \ref{remark: ex o q = q} we get that:
\begin{equation*}
g_x \circ Q(x) = \sum_{j=1}^m \frac{ w_j }{ 2 \| x - a^j \| } \| Q(x) - a^j \|^2.
\end{equation*}
Adding and subtracting $\| x - a^j \|$ we have:
\begin{eqnarray*}
g_x \circ Q(x) & = & \sum_{j=1}^m \frac{ w_j }{ 2 \| x - a^j \| } \left[ \left\| x - a^j \right\| + \left( \left\| Q(x) - a^j \right\| - \left\| x - a^j \right\| \right) \right]^2 
\\
& = & \frac{1}{2} \sum_{j=1}^m w_j \left\| x - a^j \right\| + \sum_{j=1}^m w_j \left( \left\| Q(x) - a^j \right\| - \left\| x - a^j \right\| \right) 
\\
& + & \sum_{j=1}^m \frac{ w_j }{ 2 \| x - a^j \| } \left( \left\| Q(x) - a^j \right\| - \left\| x - a^j \right\| \right)^2.
\end{eqnarray*}
Notice that the first term of the last equality is the Weber function (divided by two), and the last term is a non-negative number, so we will define it as $\delta$. So, using the definition of the Weber function in the middle term we obtain:
\begin{equation*}
g_x \circ Q(x) = \frac{1}{2} f(x) + \left( f(Q(x)) - f(x) \right) + \delta.
\end{equation*} 
\qed
\end{proof}

\section{Convergence to optimality results} 
\label{section: convergence results}
This section states the main results about convergence of the sequence $\left\{ x^{(l)} \right\}$ generated by Algorithm \ref{algorithm: proposed algorithm}. The next theorem establishes that if a point $x \in \Omega$ is not a fixed point of the iteration function, then the function $f$ strictly decreases at the next iterate.

\begin{theorem} 
\label{theorem: descent property}
Let $x \in \Omega$ be such that $x \neq Q(x)$. Then $f(Q(x)) < f(x)$.
\end{theorem}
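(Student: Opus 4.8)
The plan is to prove the strict decrease by combining, in each of the two cases in the definition of $Q$, the strict upper bound $g_x(Q(x)) < \frac{1}{2} f(x)$ from Lemma \ref{lemma: property gxqx < fx/2} with an identity that rewrites $g_x(Q(x))$ as $f(Q(x))$ minus $\frac{1}{2} f(x)$ plus a non-negative remainder. The subtraction of $\frac{1}{2} f(x)$ then collapses the two facts into the assertion $f(Q(x)) < f(x)$.

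If $x \neq a^1, \ldots, a^m$, Lemma \ref{lemma: equality of gxqx} gives $g_x \circ Q(x) = \frac{1}{2} f(x) + \left( f(Q(x)) - f(x) \right) + \delta$ with $\delta \geq 0$. Substituting this into the strict inequality $g_x(Q(x)) < \frac{1}{2} f(x)$ of Lemma \ref{lemma: property gxqx < fx/2} and cancelling the common term $\frac{1}{2} f(x)$ yields $f(Q(x)) - f(x) + \delta < 0$, hence $f(Q(x)) < f(x) - \delta \leq f(x)$, as desired.

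If $x = a^k$ for some $k \in \{1, \ldots, m\}$ (so $a^k \in \Omega$, since $Q$ is only defined on $\Omega$), I would first establish the vertex analogue of Lemma \ref{lemma: equality of gxqx}, namely $g_{a^k}(Q(a^k)) = f(Q(a^k)) - \frac{1}{2} f(a^k) + \delta$ with $\delta \geq 0$. This follows from the definition of $g_{a^k}$ in (\ref{equation: definition of gx}) by the same device used in the proof of Lemma \ref{lemma: equality of gxqx}: inside the quadratic sum over $j \neq k$ write $\|Q(a^k) - a^j\| = \|a^k - a^j\| + \left( \|Q(a^k) - a^j\| - \|a^k - a^j\| \right)$, expand the square, and use $\sum_{j \neq k} w_j \|a^k - a^j\| = f(a^k)$ together with $\sum_{j \neq k} w_j \|Q(a^k) - a^j\| = f(Q(a^k)) - w_k \|Q(a^k) - a^k\|$; the linear term $w_k \|Q(a^k) - a^k\|$ produced this way is cancelled by the explicit $w_k \|Q(a^k) - a^k\|$ term present in $g_{a^k}$, and the leftover sum of squares is the non-negative $\delta$. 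Combining this identity with $g_{a^k}(Q(a^k)) < \frac{1}{2} f(a^k)$ from Lemma \ref{lemma: property gxqx < fx/2} gives $f(Q(a^k)) < f(a^k) - \delta \leq f(a^k)$, completing the proof.

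The only genuine computation here is the derivation of the vertex identity, since Lemma \ref{lemma: equality of gxqx} is stated only for non-vertices; the point requiring care is that the index $j = k$ is absent from the quadratic sum defining $g_{a^k}$ but re-enters linearly through the extra $w_k \|\cdot - a^k\|$ term, which is precisely what makes the cancellation go through. Everything else is a one-line substitution, so I do not expect any serious obstacle.
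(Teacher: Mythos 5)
Your proof is correct. For $x \neq a^1, \ldots, a^m$ it coincides with the paper's argument: substitute the identity of Lemma \ref{lemma: equality of gxqx} into the strict bound of Lemma \ref{lemma: property gxqx < fx/2} and cancel $\tfrac{1}{2}f(x)$. In the vertex case the paper takes a superficially different route: it starts from $g_{a^k}(Q(a^k)) - g_{a^k}(a^k) < 0$ (Lemma \ref{lemma: comparison between gxqx and gxx}) and bounds this difference from below by $f(Q(a^k)) - f(a^k)$ using the elementary inequality $(a^2 - b^2)/(2b) \geq a - b$ with $a = \|Q(a^k) - a^j\|$ and $b = \|a^k - a^j\|$. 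Your route --- proving the vertex analogue of Lemma \ref{lemma: equality of gxqx}, namely $g_{a^k}(Q(a^k)) = \tfrac{1}{2} f(a^k) + \left( f(Q(a^k)) - f(a^k) \right) + \delta$ with $\delta = \sum_{j \neq k} \tfrac{w_j}{2\|a^k - a^j\|}\left( \|Q(a^k) - a^j\| - \|a^k - a^j\| \right)^2 \geq 0$, then invoking Lemma \ref{lemma: property gxqx < fx/2} --- is the same computation packaged as an identity rather than a one-sided estimate: the remainder discarded by $(a^2-b^2)/(2b) \geq a - b$ is precisely your $\delta$, and Lemma \ref{lemma: property gxqx < fx/2} at a vertex is itself just Lemma \ref{lemma: comparison between gxqx and gxx} combined with $g_{a^k}(a^k) = \tfrac{1}{2} f(a^k)$. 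You also correctly handle the one delicate point, the cancellation of the linear term $w_k\|Q(a^k)-a^k\|$ in $g_{a^k}$ against the $-w_k\|Q(a^k)-a^k\|$ arising from $\sum_{j\neq k} w_j\|Q(a^k)-a^j\| = f(Q(a^k)) - w_k\|Q(a^k)-a^k\|$. The two versions prove the same thing; yours has the minor aesthetic advantage of making the vertex and non-vertex cases formally parallel.
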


\begin{proof}
Let us consider that $x \neq a^1, \ldots, a^m$. By Lemma \ref{lemma: property gxqx < fx/2}, we have that:
\begin{equation*}
g_x \circ Q(x) < \frac{1}{2} f(x).  
\end{equation*}
By Lemma \ref{lemma: equality of gxqx} we get that:
\begin{equation*}
\frac{1}{2} f(x) + f(Q(x)) - f(x) + \delta < \frac{1}{2} f(x).  
\end{equation*}
Simplifying the last expression we obtain:
\begin{equation*}
f(Q(x)) - f(x) + \delta < 0.
\end{equation*}
Finally, 
\begin{equation*}
f(Q(x)) - f(x) \leq f(Q(x)) - f(x) + \delta < 0.
\end{equation*}
Therefore, $f(Q(x)) < f(x)$.

Now, consider that $x = a^k$ for some $k = 1, \ldots, m$. Following a reasoning similar than in \cite[pp. 564]{vardizhang2001}, using Lemma \ref{lemma: comparison between gxqx and gxx} we have that:
\begin{equation*}
g_{a^k} \circ Q(a^k) - g_{a^k}(a^k) < 0.
\end{equation*}
By definition of $g_{a^k}$ we know that:
\begin{eqnarray*}
g_{a^k} \circ Q(a^k) - g_{a^k}(a^k) & = & w_k \left\| Q(a^k) - a^k \right\| 
\\
& + & \sum_{\substack{j=1 \\ j \neq k}}^m \frac{ w_j }{ 2 \left\| a^k - a^j \right\| } \left( \left\| Q(a^k) - a^j \right\|^2 - \left\| a^k - a^j \right\|^2 \right).
\end{eqnarray*}
Using the fact that $\left( a^2 - b^2 \right)/ (2b) \geq a - b$ for $a = \left\| Q(a^k) - a^j \right\|^2 \geq 0$ and $b = \left\| a^k - a^j \right\|^2 > 0$ we obtain that:
\begin{eqnarray*}
g_{a^k} \circ Q(a^k) - g_{a^k}(a^k) & \geq & w_k \left\| Q(a^k) - a^k \right\| 
\\
& - & \sum_{\substack{j=1 \\ j \neq k}}^m w_j \left\| a^k - a^j \right\| + \sum_{\substack{j=1 \\ j \neq k}}^m w_j \left\| Q(a^k) - a^j \right\|.
\end{eqnarray*}
Rearranging terms we deduce that:
\begin{eqnarray*}
0 & > & g_{a^k} \circ Q(a^k) - g_{a^k}(a^k) 
\\
& = & \sum_{j=1}^m w_j \left\| Q(a^k) - a^j \right\| - \sum_{j=1}^m w_j \left\| a^k - a^j \right\| = f(Q(a^k)) - f(a^k),
\end{eqnarray*}
and the proof is complete. \qed
\end{proof}

\begin{corollary} 
\label{corollary: not increasing sequence}
Let $\left\{ x^{(l)} \right\}$ be the sequence generated by Algorithm \ref{algorithm: proposed algorithm}. Then the sequence $\left\{ f \left( x^{(l)} \right) \right\}$ is not increasing. Even more, each time $x^{(l)} \neq Q \left( x^{(l)} \right)$ the sequence strictly decreases at the next iterate.
\end{corollary}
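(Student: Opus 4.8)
The plan is to derive the corollary directly from Theorem~\ref{theorem: descent property} applied along the sequence. Recall that Algorithm~\ref{algorithm: proposed algorithm} defines $x^{(l)} = Q(x^{(l-1)})$ for every $l \in \mathbbm{N}$, and that every iterate $x^{(l)}$ lies in $\Omega$ (this follows from the fact that $Q$ maps $\Omega$ into $\Omega$, together with $x^{(0)} \in \Omega$). Hence for each $l$ we may legitimately invoke Theorem~\ref{theorem: descent property} with $x = x^{(l-1)} \in \Omega$.

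The argument then splits into two cases according to whether $x^{(l-1)}$ is a fixed point of $Q$. First, if $x^{(l-1)} \neq Q(x^{(l-1)}) = x^{(l)}$, then Theorem~\ref{theorem: descent property} gives $f(x^{(l)}) = f(Q(x^{(l-1)})) < f(x^{(l-1)})$, so the sequence of function values strictly decreases at this step. Second, if $x^{(l-1)} = Q(x^{(l-1)})$, then $x^{(l)} = x^{(l-1)}$ and trivially $f(x^{(l)}) = f(x^{(l-1)})$, so the sequence does not increase at this step either. Combining the two cases, $f(x^{(l)}) \leq f(x^{(l-1)})$ for every $l$, which is exactly the assertion that $\{ f(x^{(l)}) \}$ is non-increasing; and the strict decrease whenever $x^{(l)} = Q(x^{(l)})$ fails --- equivalently, whenever $x^{(l-1)} \neq Q(x^{(l-1)})$ at the relevant index --- is precisely the "even more" clause, once one reindexes to match the statement (the strict drop from $f(x^{(l)})$ to $f(x^{(l+1)})$ occurs exactly when $x^{(l)} \neq Q(x^{(l)})$).

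I do not expect any genuine obstacle here: the corollary is an immediate iteration of the descent theorem, and the only mild care needed is the bookkeeping of which index the hypothesis "$x^{(l)} \neq Q(x^{(l)})$" attaches to, and the observation that feasibility of all iterates is guaranteed so that Theorem~\ref{theorem: descent property} is applicable at every step. One short sentence noting that $x^{(l)} \in \Omega$ for all $l$ (by induction, since $Q : \Omega \to \Omega$ and $x^{(0)} \in \Omega$) suffices to close that gap, and the rest is the two-line case distinction above.
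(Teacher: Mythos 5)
Your proof is correct and is exactly the argument the paper intends: the corollary is stated as an immediate consequence of Theorem \ref{theorem: descent property} (the paper gives no separate proof), obtained by applying that theorem at each iterate together with the feasibility of the sequence and the trivial case $x^{(l-1)} = Q(x^{(l-1)})$. Your careful handling of the indexing and of why each $x^{(l)}$ lies in $\Omega$ is a welcome bit of extra rigor, but there is no substantive difference in approach.
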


If the sequence $\left\{ x^{(l)} \right\}$ generated by Algorithm \ref{algorithm: proposed algorithm} were not bounded, then we could choose a subsequence $\left\{ y^{(l)} \right\}$ such that $y^{(l)} \to \infty$. But this implies that $f(y^{(l)}) \to \infty$, which is a contradiction since the sequence $\left\{ f \left( x^{(l)} \right) \right\}$ is not increasing.

\begin{remark}
The sequence $\left\{ x^{(l)} \right\}$ generated by Algorithm \ref{algorithm: proposed algorithm} is bounded. So, there exists a subsequence convergent to a point $x^* \in \Omega$. Hence, $x^*$ is a feasible point.
\end{remark}

Due to the nondifferentiability of $f$ at the vertices $a^1, \ldots, a^m$, we can not use the KKT optimality conditions at $a^k$. Therefore, if $a^k$ and $z$ are in $\Omega$, let us define $G_{a^k}^z : [0,1] \rightarrow \mathbbm{R}$ by:
\begin{equation*}
G_{a^k}^z(t) = f( a^k + t(z-a^k) ).
\end{equation*}
If $a^k \in \Omega$, $z \in \Omega$, $t \in [0,1]$ and $\Omega$ convex, we have that $a^k + t(z-a^k) \in \Omega$. Notice that the right-hand side derivative $G_{a^k}^z(0+)$ (or the directional derivative of $f$ in the direction of $z$) exists (see \cite[pp. 33]{jahn2007introduction}). Besides that,
\begin{equation} 
\label{equation: property of gakzp}
G_{a^k}^{z \ \prime}(0+) = w_k \left\| z - a^k \right\| - \left\langle \widetilde{R}(a^k) , z - a^k \right\rangle.
\end{equation}

The next lemma shows that if we are in a vertex $a^k$, the directional derivative of $f$ at $a^k$ in the direction of $Q(a^k)$ is a descent direction.
\begin{lemma} 
\label{lemma: property of a^k as a minimum}
Let $a^k \in \Omega$ be such that $T(a^k) \notin \Omega$. Then:
\begin{equation} 
\label{equation: property of a^k as a mininum 1}
G_{a^k}^{z \ \prime}(0+) \geq G_{a^k}^{Q(a^k) \ \prime}(0+), \quad \forall \, z \in \left[ a^k, T(a^k) \right],
\end{equation}
where:
\begin{equation} 
\label{equation: property of a^k as a mininum 2}
G_{a^k}^{Q(a^k) \ \prime}(0+) = - 2 \left[ 1 - \beta(a^k) \right] A(a^k) \left\| \widetilde{T}(a^k) - a^k \right\| \left\| Q(a^k) - a^k \right\|. 
\end{equation}
\end{lemma}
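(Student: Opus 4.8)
The plan is to reduce the whole statement to one elementary one–variable computation of $G_{a^k}^{z\,\prime}(0+)$ along the segment $[a^k,T(a^k)]$, and then to observe that among the \emph{feasible} points of that segment $Q(a^k)$ is the one farthest from $a^k$. I start by recording the structural facts that make the computation close. Since $a^k\in\Omega$ but $T(a^k)\notin\Omega$, Theorem \ref{theorem: main result of modified weiszfeld algorithm} rules out $a^k=x^u$ (otherwise $T(a^k)=a^k\in\Omega$, a contradiction); hence Remark \ref{remark: properties of beta}(b) gives $0<\beta(a^k)<1$, so $\beta(a^k)=\gamma(a^k)$, which forces $r(a^k)\neq 0$ and $\beta(a^k)=w_k/r(a^k)$. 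Combining this with Lemma \ref{lemma: widetilder equal to 2A(widetildet - x)} (so $r(a^k)=2A(a^k)\|\widetilde{T}(a^k)-a^k\|$) yields the key identity $w_k=2\,\beta(a^k)\,A(a^k)\,\|\widetilde{T}(a^k)-a^k\|$ and, in particular, $\widetilde{T}(a^k)\neq a^k$. I will also use Remark \ref{remark: properties of beta}(c), i.e.\ $T(a^k)-a^k=(1-\beta(a^k))(\widetilde{T}(a^k)-a^k)$.

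Next, for $z\in[a^k,T(a^k)]$ with $z\in\Omega$ write $z-a^k=\mu\,(T(a^k)-a^k)$ with $\mu\in[0,1]$. Substituting into (\ref{equation: property of gakzp}) and using Lemma \ref{lemma: widetilder equal to 2A(widetildet - x)}, Remark \ref{remark: properties of beta}(c) and the identity for $w_k$ above, the bracket collapses and a short computation gives
\begin{equation*}
G_{a^k}^{z\,\prime}(0+)=-\,2\,\mu\,A(a^k)\,\big(1-\beta(a^k)\big)^2\,\|\widetilde{T}(a^k)-a^k\|^2 ,
\end{equation*}
so that $G_{a^k}^{z\,\prime}(0+)$ equals $(-\mu)$ times a \emph{strictly positive} constant. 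Specializing to $z=Q(a^k)$, Remark \ref{remark: properties of Q}(b) and Lemma \ref{lemma: properties of lambda(a^k)} give $\mu=1-\lambda(a^k)$ and $\|Q(a^k)-a^k\|=(1-\lambda(a^k))(1-\beta(a^k))\|\widetilde{T}(a^k)-a^k\|$; rewriting one factor $1-\lambda(a^k)$ by means of this last relation turns the displayed formula into exactly (\ref{equation: property of a^k as a mininum 2}).

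For the inequality (\ref{equation: property of a^k as a mininum 1}), recall that $G_{a^k}^{z}$ is only defined for $z\in\Omega$, so the quantifier concerns the feasible part of $[a^k,T(a^k)]$: a point $z=(1-\mu)a^k+\mu T(a^k)$ lies in $\Omega$ exactly when $1-\mu\in\mathcal{S}_k$, hence $1-\mu\ge\inf\mathcal{S}_k=\lambda(a^k)$, i.e.\ $\mu\le 1-\lambda(a^k)$ — geometrically, $Q(a^k)$ is the feasible point of the segment farthest from $a^k$. Since the map $\mu\mapsto G_{a^k}^{z\,\prime}(0+)$ has constant slope $-2A(a^k)(1-\beta(a^k))^2\|\widetilde{T}(a^k)-a^k\|^2<0$, for every such $z$ we obtain $G_{a^k}^{z\,\prime}(0+)\ge G_{a^k}^{Q(a^k)\,\prime}(0+)$, which is (\ref{equation: property of a^k as a mininum 1}). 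The only real obstacle is bookkeeping: following the chain $w_k\leftrightarrow\gamma(a^k)\leftrightarrow\beta(a^k)\leftrightarrow r(a^k)\leftrightarrow\widetilde{R}(a^k)\leftrightarrow\widetilde{T}(a^k)$ carefully enough that the bracket in (\ref{equation: property of gakzp}) really reduces to a single monotone term, and keeping track of which points of $[a^k,T(a^k)]$ the claim is actually about.
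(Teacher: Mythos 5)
Your proof is correct and follows essentially the same route as the paper's: both rest on formula (\ref{equation: property of gakzp}), Lemma \ref{lemma: widetilder equal to 2A(widetildet - x)}, the identity $w_k = 2\beta(a^k)A(a^k)\|\widetilde{T}(a^k)-a^k\|$, and the observation that $Q(a^k)$ is the feasible point of the segment $[a^k,T(a^k)]$ farthest from $a^k$. Your single parametrization by $\mu$ is slightly cleaner than the paper's unit-vector computation, and your explicit restriction to feasible $z$ (so that $\mu\le 1-\lambda(a^k)$) makes precise the paper's looser step asserting that the right-hand side is minimized at $\lambda=\lambda(a^k)$.
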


\begin{proof}
If $T(a^k) = a^k$ then $T(a^k) \in \Omega$, which is a contradiction. Besides that, if $\widetilde{T}(a^k) = a^k$, we would have that $T(a^k) = a^k$ because of (\ref{equation: iteration function of the modified algorithm with beta}), and again it would be a contradiction. So, we will consider $\widetilde{T}(a^k) \neq a^k$ and $T(a^k) \neq a^k$ for the rest of the proof. Since $T(a^k) \neq a^k$, then $\beta(a^k) \in (0,1)$ (see Remark \ref{remark: properties of beta} and Theorem \ref{theorem: main result of modified weiszfeld algorithm}). 

Let us prove equation (\ref{equation: property of a^k as a mininum 2}) first. Now, by (\ref{equation: property of gakzp}) we can see that:
\begin{equation*}
G_{a^k}^{Q(a^k) \ \prime}(0+) = w_k \left\| Q(a^k) - a^k \right\| - \left\langle \widetilde{R}(a^k) , Q(a^k) - a^k \right\rangle.
\end{equation*}
Notice that if $Q(a^k) = a^k$, equation (\ref{equation: property of a^k as a mininum 2}) holds. So, let us consider from now on that $Q(a^k) \neq a^k$. By using Lemma \ref{lemma: widetilder equal to 2A(widetildet - x)} we replace $\widetilde{R}(a^k)$ and get:
\begin{equation*}
G_{a^k}^{Q(a^k) \ \prime}(0+) = w_k \left\| Q(a^k) - a^k \right\| - 2 A(a^k) \left\langle \widetilde{T}(a^k) - a^k , Q(a^k) - a^k \right\rangle.
\end{equation*}
Extracting common factors and using the definition of $\beta$ when it belongs to $(0,1)$ we obtain:
\begin{eqnarray*}
G_{a^k}^{Q(a^k) \ \prime}(0+) & = & 2 A(a^k) \left\| Q(a^k) - a^k \right\| \left\| \widetilde{T}(a^k) - a^k \right\| \Bigg[ \beta(a^k) 
\\
& - & \left. \left\langle \frac{ \widetilde{T}(a^k) - a^k }{ \left\| \widetilde{T}(a^k) - a^k \right\| } , \frac{ Q(a^k) - a^k }{ \left\| Q(a^k) - a^k \right\| } \right\rangle \right].
\end{eqnarray*}

By Remarks \ref{remark: properties of beta} and \ref{remark: properties of Q} the vectors $Q(a^k) - a^k$ and $\widetilde{T}(a^k) - a^k$ are parallel, so:
\begin{equation*}
G_{a^k}^{Q(a^k) \ \prime}(0+) = 2 A(a^k) \left\| Q(a^k) - a^k \right\| \left\| \widetilde{T}(a^k) - a^k \right\| \left[ \beta(a^k) - 1 \right].
\end{equation*}
which is equivalent to (\ref{equation: property of a^k as a mininum 2}).

Now, let us prove (\ref{equation: property of a^k as a mininum 1}). If $z = a^k$ then $G_{a^k}^z(t) = f(a^k)$ for all $t \in [0,1]$, thus $G_{a^k+}^{a^k \ \prime}(0) = 0$, and therefore the inequality (\ref{equation: property of a^k as a mininum 1}) holds. So, let us assume that $z \neq a^k$ for the rest of the proof. Using (\ref{equation: property of gakzp}) and due to Lemma \ref{lemma: widetilder equal to 2A(widetildet - x)} to replace $\widetilde{R}(a^k)$:
\begin{equation*}
G_{a^k}^{z \ \prime}(0+) = w_k \left\| z - a^k \right\| - 2 A(a^k) \left\langle \widetilde{T}(a^k) - a^k , z - a^k \right\rangle.
\end{equation*}
Extracting common factors:
\begin{eqnarray*}
G_{a^k}^{z \ \prime}(0+) & = & 2 A(a^k) \left\| z - a^k \right\| \left\| \widetilde{T}(a^k) - a^k \right\| \left[ \frac{ w_k }{ 2 A(a^k) \left\| \widetilde{T}(a^k) - a^k \right\| } \right. 
\\
& - & \left. \left\langle \frac{ \widetilde{T}(a^k) - a^k }{ \left\| \widetilde{T}(a^k) - a^k \right\| } , \frac{ z - a^k }{ \left\| z - a^k \right\| } \right\rangle \right].
\end{eqnarray*}
Using the expression for $\beta(a^k) \in (0,1)$ we obtain:
\begin{eqnarray*}
G_{a^k}^{z \ \prime}(0+) & = & 2 A(a^k) \left\| z - a^k \right\| \left\| \widetilde{T}(a^k) - a^k \right\| \Bigg[ \beta(a^k) 
\\
& - & \left. \left\langle \frac{ \widetilde{T}(a^k) - a^k }{ \left\| \widetilde{T}(a^k) - a^k \right\| } , \frac{ z - a^k }{ \left\| z - a^k \right\| } \right\rangle \right].
\end{eqnarray*}
If $z$ belongs to the segment that joins $a^k$ and $T(a^k)$ we have that $z-a^k$ and $\widetilde{T}(a^k) - a^k$ are parallel vectors, then:
\begin{equation*}
G_{a^k}^{z \ \prime}(0+) \geq - 2 \left[ 1 - \beta(a^k) \right] A(a^k) \left\| z - a^k \right\| \left\| \widetilde{T}(a^k) - a^k \right\|. 
\end{equation*}
We can write $z = (1-\lambda)T(a^k) + \lambda a^k$ where $\lambda \in [0,1]$. Therefore:
\begin{equation*}
G_{a^k}^{z \ \prime}(0+) \geq - 2 \left[ 1 - \beta(a^k) \right] A(a^k) \left( 1 - \lambda \right) \left\| T(a^k) - a^k \right\| \left\| \widetilde{T}(a^k) - a^k \right\|. 
\end{equation*}
for all $\lambda \in [0,1]$. The minimum value of the right-hand side of the last expression happens when $\lambda = \lambda( a^k )$, so:
\begin{equation*}
G_{a^k}^{z \ \prime}(0+) \geq - 2 \left[ 1 - \beta(a^k) \right] A(a^k) \left( 1 - \lambda( a^k ) \right) \left\| T(a^k) - a^k \right\| \left\| \widetilde{T}(a^k) - a^k \right\|. 
\end{equation*}
Using Remark \ref{remark: properties of Q} we conclude that:
\begin{equation*}
G_{a^k}^{z \ \prime}(0+) \geq - 2 \left[ 1 - \beta(a^k) \right] A(a^k) \left\| Q(a^k) - a^k \right\| \left\| \widetilde{T}(a^k) - a^k \right\|.
\end{equation*}
\qed
\end{proof}

Now we will prove an equivalence that characterizes the solution of (\ref{equation: constrained weber problem}) in terms of the iteration function $Q$. Moreover, if $x^*$ is a regular point that is not a vertex, then $x^*$ is a KKT point. 

From now on, let us consider that
\begin{equation}
\label{equation: characterization of omega}
\Omega = \left\{ y \in \mathbbm{R}^n : g(y) \leq 0, h(y) = 0 \right\},
\end{equation}
where $g : \mathbbm{R}^n \to \mathbbm{R}^s$ is a convex function and $h : \mathbbm{R}^n \to \mathbbm{R}^p$ is an affine function.

\begin{theorem} 
\label{theorem: equivalence minimum fixed point}
Let $\Omega$ be defined as in (\ref{equation: characterization of omega}) and $x \in \Omega$. Consider the following propositions:
\begin{enumerate}
\item[(a)] $x$ is a KKT point.
\item[(b)] $x$ is the minimizer of the problem (\ref{equation: constrained weber problem}).
\item[(c)] $Q(x) = x$.
\end{enumerate}

If $x \neq a^1, \ldots, a^m$, $g$ and $h$ are continuously differentiable, and $x$ is a regular point, then (a), (b) and (c) are equivalent.

If $x = a^k$ for some $k = 1, \ldots, m$, then (b) implies (c).
\end{theorem}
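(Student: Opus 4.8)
\medskip

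\noindent\textbf{The case $x \neq a^1,\ldots,a^m$ with $x$ regular.} The plan is to prove the cycle of implications (b) $\Rightarrow$ (a) $\Rightarrow$ (c) $\Rightarrow$ (b), exploiting convexity to make two of the three arrows cheap. The implication (b) $\Rightarrow$ (a) is standard constrained-optimization theory: since $g,h$ are continuously differentiable and $x$ is a regular point, a local (hence global, by convexity) minimizer of the differentiable function $f$ on $\Omega$ — note $f$ is differentiable away from the vertices, with $\nabla f(x) = -\widetilde R(x)$ by \eqref{equation: nabla equal to minus widetilder} — must satisfy the KKT conditions. For (c) $\Rightarrow$ (b): if $Q(x) = x$ then, were $x$ not the minimizer, we would have $x^* \neq x$ and by strict convexity $f(x^*) < f(x)$; but I would instead invoke Theorem~\ref{theorem: descent property} in its contrapositive form together with the fact that $Q(x) = x$ forces $f$ to have no feasible descent at $x$ — more directly, I will show $Q(x) = x$ implies $\langle \nabla f(x), y - x\rangle \geq 0$ for all $y \in \Omega$, which is exactly the first-order optimality condition for a convex problem and hence gives (b). The remaining arrow (a) $\Rightarrow$ (c) is the substantive one and is discussed below.

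\medskip

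\noindent\textbf{The arrow (a) $\Rightarrow$ (c).} Suppose $x$ is a KKT point. Since $x \neq a^k$, $Q(x) = P_\Omega \circ T(x)$, and by Lemma~\ref{lemma: widetilder equal to 2A(widetildet - x)} we have $T(x) = x + \frac{1}{2A(x)}\widetilde R(x) = x - \frac{1}{2A(x)}\nabla f(x)$, so $T(x)$ is a gradient-descent step from $x$. The KKT conditions at a regular point state that $-\nabla f(x) = \sum \mu_i \nabla g_i(x) + \sum \nu_j \nabla h_j(x)$ lies in the normal cone $N_\Omega(x)$ (using convexity of $g$ and affineness of $h$, this normal-cone characterization is valid). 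Equivalently, $\langle \nabla f(x), y - x\rangle \geq 0$ for all $y \in \Omega$, i.e. $\langle x - T(x), y - x\rangle \geq 0$ for all $y \in \Omega$. This is precisely the variational characterization of $x$ being the projection of $T(x)$ onto $\Omega$, so $P_\Omega(T(x)) = x$, that is, $Q(x) = x$. The main obstacle here is marshalling the equivalence between the algebraic KKT multiplier statement and the geometric normal-cone/projection statement cleanly; this is where regularity of $x$ and convexity of $\Omega$ are genuinely used, and I would cite the relevant projection property (as in \cite[pp.~93]{andreassonevgrafovpatriksson2005}) and a standard convex-analysis fact relating KKT multipliers to the normal cone.

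\medskip

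\noindent\textbf{The case $x = a^k$: (b) $\Rightarrow$ (c).} Here $f$ is nondifferentiable at $a^k$, so I replace gradients by directional derivatives via the function $G_{a^k}^z$ and formula \eqref{equation: property of gakzp}. Assume $a^k$ is the minimizer. If $T(a^k) \in \Omega$, then by Remark~\ref{remark: properties of Q}(a), $Q(a^k) = T(a^k)$, and I must show $T(a^k) = a^k$: indeed, if $T(a^k) \neq a^k$ then $a^k \neq x^u$ by Theorem~\ref{theorem: main result of modified weiszfeld algorithm}, and the directional derivative of $f$ at $a^k$ toward $T(a^k)$ is negative — by \eqref{equation: property of gakzp} combined with Lemma~\ref{lemma: widetilder equal to 2A(widetildet - x)} and $\beta(a^k) \in (0,1)$, one computes $G_{a^k}^{T(a^k)\ \prime}(0+) = 2A(a^k)\|\widetilde T(a^k) - a^k\|^2(\beta(a^k) - 1) < 0$ — contradicting minimality. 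So $T(a^k) = a^k = Q(a^k)$. If instead $T(a^k) \notin \Omega$, apply Lemma~\ref{lemma: property of a^k as a minimum}: for every $z \in [a^k, T(a^k)]$ we have $G_{a^k}^{z\ \prime}(0+) \geq G_{a^k}^{Q(a^k)\ \prime}(0+)$, and the latter equals $-2[1-\beta(a^k)]A(a^k)\|\widetilde T(a^k) - a^k\|\,\|Q(a^k) - a^k\|$ by \eqref{equation: property of a^k as a mininum 2}; since $a^k$ is the minimizer and $Q(a^k) \in \Omega$, the directional derivative $G_{a^k}^{Q(a^k)\ \prime}(0+)$ must be $\geq 0$, which forces $\|Q(a^k) - a^k\| = 0$ (as $1-\beta(a^k) > 0$, $A(a^k) > 0$, and $\widetilde T(a^k) \neq a^k$), i.e. $Q(a^k) = a^k$. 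The main subtlety in this case is handling the degenerate possibilities ($\widetilde T(a^k) = a^k$, $T(a^k) = a^k$, $a^k = x^u$) up front so that the division-by-$\beta$ and parallelism arguments are legitimate — exactly the bookkeeping already carried out in the proof of Lemma~\ref{lemma: property of a^k as a minimum}.
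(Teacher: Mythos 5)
Your proof is correct, but it takes a genuinely different route from the paper's in both halves. For $x \neq a^1, \ldots, a^m$ the paper gets (a) $\Leftrightarrow$ (b) directly from convexity, gets (b) $\Rightarrow$ (c) from Theorem \ref{theorem: descent property}, and closes the loop with (c) $\Rightarrow$ (a) by writing down the explicit KKT system of the projection subproblem $\mathrm{argmin}_z \, \frac{1}{2}\|z - T(x)\|^2$ over $\Omega$ and rescaling its multipliers by $2A(x)$ so that $x - T(x)$ becomes $\nabla f(x)$; you instead run the cycle (b) $\Rightarrow$ (a) $\Rightarrow$ (c) $\Rightarrow$ (b) entirely through the variational inequality $\langle \nabla f(x), y - x\rangle \geq 0$ for all $y \in \Omega$, identifying it on one side with the normal-cone form of the KKT conditions (legitimate here because the $g_i$ are convex and $h$ is affine) and on the other with the characterization of $x = P_{\Omega}(T(x))$. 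Your version buys a cleaner geometric picture, avoids exhibiting multipliers, and your (c) $\Rightarrow$ (b) arrow needs no constraint qualification at all; the paper's version has the virtue of producing the multipliers of (\ref{equation: constrained weber problem}) explicitly as $2A(x)\mu_j$ and $2A(x)\lambda_j$. For the vertex case the paper's argument is a one-liner --- if $Q(a^k) \neq a^k$ then $f(Q(a^k)) < f(a^k)$ by Theorem \ref{theorem: descent property}, contradicting minimality --- whereas you rebuild it from directional derivatives via (\ref{equation: property of gakzp}) and Lemma \ref{lemma: property of a^k as a minimum}; this works but is much longer than necessary, and it contains a small algebraic slip: since $w_k = 2\beta(a^k)A(a^k)\| \widetilde{T}(a^k) - a^k \|$ and $T(a^k) - a^k = (1-\beta(a^k))(\widetilde{T}(a^k) - a^k)$, one gets $G_{a^k}^{T(a^k) \ \prime}(0+) = -2A(a^k)\left[1-\beta(a^k)\right]^2 \| \widetilde{T}(a^k) - a^k \|^2$, i.e. your displayed expression is missing a factor of $1 - \beta(a^k)$; the sign, and hence your contradiction, is unaffected.
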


\begin{proof}
Let $x \neq a^1, \ldots, a^m$ be. Since $f$ is strictly convex and $\Omega$ is convex, the KKT optimality conditions are necessary and sufficient. Therefore, it holds that (a) is equivalent to (b).

Now we will prove that (b) implies (c). Let us suppose that $x$ is the minimizer of the problem (\ref{equation: constrained weber problem}). If $x$ were not a fixed point of the iteration function $Q$, we would have that $x \neq Q(x)$, which means that $f(Q(x)) < f(x)$ by Theorem \ref{theorem: descent property}. This contradicts the hypothesis.

To demonstrate that (c) implies (a), we will assume that $x$ is a fixed point of $Q$, that is, $x = Q(x)$. Since $Q(x) = P_{\Omega} \circ T(x)$, $x$ is the solution of:
\begin{equation*}
\begin{array}{rl}
\displaystyle \mathop{ \mathrm{argmin} }_z & \displaystyle F(z) = \frac{1}{2} \left\| z - T(x) \right\|^2 \\ \mathrm{subject \, to} & g(z) \leq 0, \\ & h(z) = 0.
\end{array}
\end{equation*}
Since $F$ and $g$ are convex, $h$ is affine, and $x$ is a regular point, the KKT optimality conditions hold at $x$. That is, there exist multipliers $\left\{ \mu_j \right\}_{j=1}^s$ and $\left\{ \lambda_j \right\}_{j=1}^p$ such that (see \cite{luenberger2008,quarteroni2006numerical}):
\begin{eqnarray*}
x - T(x) + \sum_{j=1}^s \mu_j \nabla g_j(x) + \sum_{j=1}^p \lambda_j \nabla h_j(x) & = & 0,
\\
\mu_j g_j( x ) & = & 0, \quad j = 1, \ldots, s,
\\
\mu_j & \geq & 0, \quad j = 1, \ldots, s,
\\
g(x) & \leq & 0,
\\
h(x) & = & 0.
\end{eqnarray*}
Multiplying these equations by $2 A(x)$, using equation (\ref{equation: nabla equal to minus widetilder}), Lemma \ref{lemma: widetilder equal to 2A(widetildet - x)} and Remark \ref{remark: properties of beta}, we obtain:
\begin{eqnarray*}
\nabla f(x) + \sum_{j=1}^s \left( 2 A(x) \mu_j \right) \nabla g_j(x) + \sum_{j=1}^p \left( 2 A(x) \lambda_j \right) \nabla h_j(x) & = & 0,
\\
\left( 2 A(x) \mu_j \right) g_j( x ) & = & 0, \quad j = 1, \ldots, s,
\\
\left( 2 A(x) \mu_j \right) & \geq & 0, \quad j = 1, \ldots, s,
\\
g(x) & \leq & 0,
\\
h(x) & = & 0.
\end{eqnarray*}
where $\left\{ 2 A(x) \mu_j \right\}_{j=1}^s$ and $\left\{ 2 A(x) \lambda_j \right\}_{j=1}^p$ are multipliers. Therefore, $x$ is a KKT point of the problem (\ref{equation: constrained weber problem}) (see \cite{luenberger2008,quarteroni2006numerical}).

Now, let us suppose that $x = a^k$ for some $k = 1, \ldots, m$. As before, if $x$ is a minimizer of the problem (\ref{equation: constrained weber problem}), then $Q(a^k) = a^k$, otherwise $f(Q(a^k)) < f(a^k)$, which would be a contradiction. \qed
\end{proof}

\begin{section}{Numerical experiments.} 
\label{section: numerical experiments}
The purpose of this section is to discuss the efficiency and robustness of the proposed algorithm versus a solver for nonlinear programming problems.

A prototype code of Algorithm \ref{algorithm: proposed algorithm} was programmed in MATLAB (version R2011a) and executed in a PC running Linux OS, Intel(R) Core(TM) i7 CPU Q720, 1.60GHz. 

We have considered a closed and convex set $\Omega \subset \mathbbm{R}^2$ defined by the set $\Omega = \left\{ y \in \mathbbm{R}^n : g(y) \leq 0 \right\}$, where $g$ is given by:
\begin{equation*}
g( x ) = \left[ \begin{array}{c} \displaystyle - 4 - \frac{ 1 }{ 8 } x + \frac{ 7 }{ 72 } x^2 + \frac{ 1 }{ 216 } x^2 ( x - 3 ) + y \\[3mm] \displaystyle \frac{ 4 }{ 5 } x + y - \frac{ 59 }{ 10 } \\[3mm] \displaystyle x - \frac{ 11 }{ 2 } \\[3mm] \displaystyle \frac{ 3 }{ 2 } x - y - \frac{ 35 }{ 4 } \\[3mm] \displaystyle x - y - \frac{ 13 }{ 2 } \\[3mm] \displaystyle - 4 + \frac{ 1 }{ 8 } ( x - 1 ) + \frac{ 1 }{ 16 } ( x - 1 )^2 + \frac{ 1 }{ 32 } ( x - 1 )^2 ( x - 3 ) - y \\[3mm] \displaystyle - \frac{ 1 }{ 3 } x - y - \frac{ 11 }{ 3 } \\[3mm] \displaystyle - \frac{ 2 }{ 3 } x - y - \frac{ 13 }{ 3 } \\[3mm] \displaystyle - 4 x + y - 19 \end{array} \right].
\end{equation*}
The feasible set is defined by linear and nonlinear constraints, as it can be seen in Figure \ref{figure: feasible set}.
\begin{center}
\begin{figure}
\includegraphics[width=115mm]{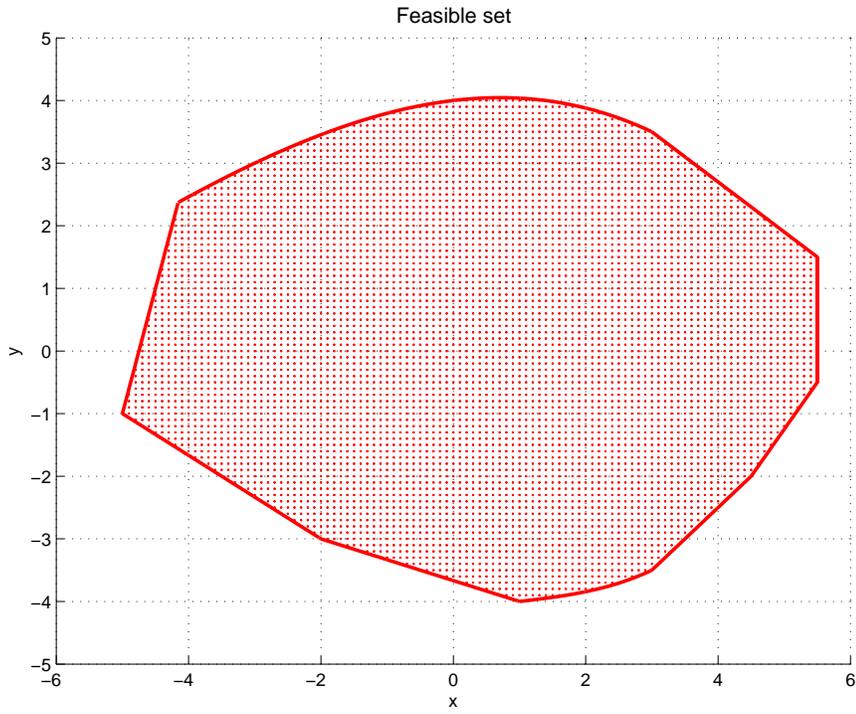}
\caption{Feasible set $\Omega$}
\label{figure: feasible set}
\end{figure}
\end{center}

We have built $1000$ different experiments where for each one:
\begin{itemize}
\item The number of vertices was $m = 50$.
\item The vertices were normally distributed random vectors, with mean equal to $0$ and standard deviation equal to $10$. 
\item The weights were uniformly distributed random positive numbers between $0$ and $10$.
\item Tolerance was set to $\varepsilon = 0.00001$.
\end{itemize}

On one hand, each experiment was solved using Algorithm \ref{algorithm: proposed algorithm} and, on the other hand, it was considered as a nonlinear programming problem and solved using function $fmincon$ (see \cite{fmincon} and references therein). Since the Weber function (\ref{equation: weber function}) is not differentiable at the vertices, nonlinear programming solvers may fail.

Let $x_m(i)$ be the solution of (\ref{equation: constrained weber problem}) obtained by $fmincon$ in experiment $i$, and $f_{m}(i) = f(x_m(i))$. Analogously, let $x_p(i)$ be the solution of (\ref{equation: constrained weber problem}) obtained by Algorithm \ref{algorithm: proposed algorithm} in experiment $i$, and $f_{p}(i) = f(x_p(i))$. Figure \ref{figure: functionalvalues} shows the difference between the arrays $f_{m}$ and $f_{p}$. Both methods finished succesfully in all cases, however, Algorithm \ref{algorithm: proposed algorithm} found equal or better results for all experiments. For example, the difference $f_{m} - f_{p}$ was greater than $0.01$ in $35$ experiments (the maximum difference ocurred in experiment $506$).
\begin{center}
\begin{figure}
\includegraphics[width=115mm]{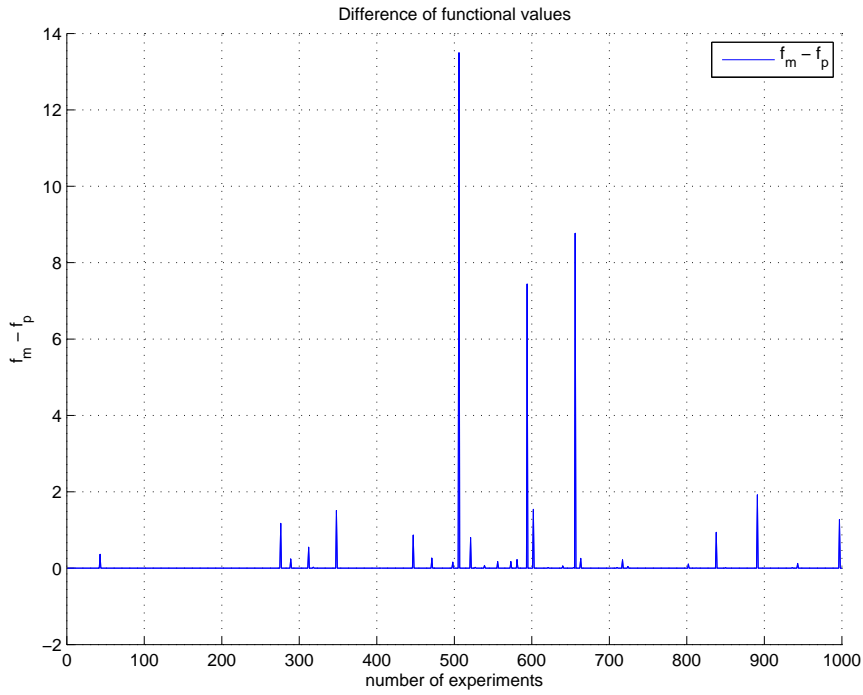}
\caption{Difference between minimum values found by Algorithm \ref{algorithm: proposed algorithm} and $fmincon$.}
\label{figure: functionalvalues}
\end{figure}
\end{center}

Feasibility of the solutions $x_p(i)$ can be checked computing $\max(g(x_m(i)))$. Results can be seen in Figure \ref{figure: feasibility}
\begin{center}
\begin{figure}
\includegraphics[width=115mm]{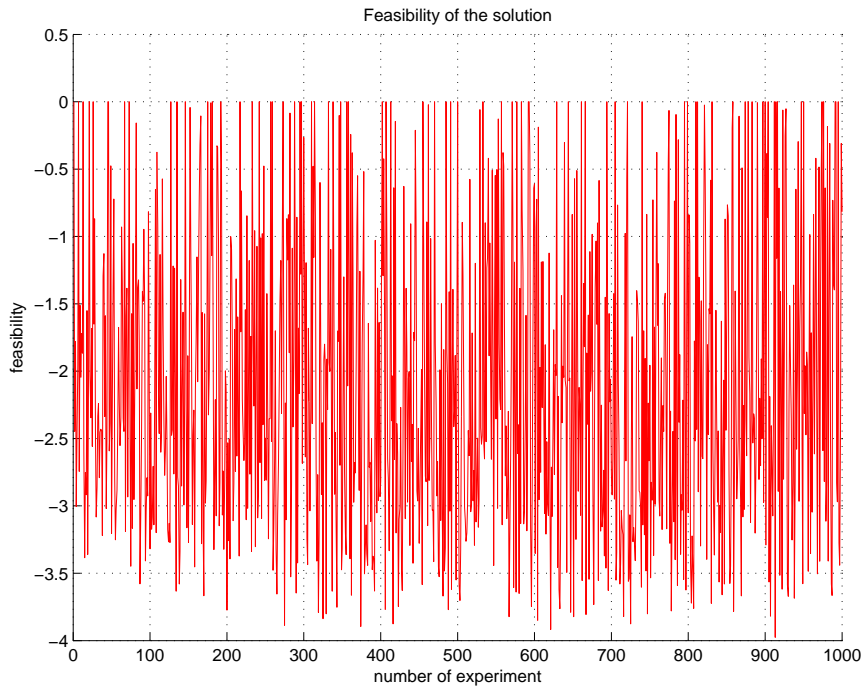}
\caption{Feasibility of the solution $x_p(i)$ obtained by Algorithm \ref{algorithm: proposed algorithm}.}
\label{figure: feasibility}
\end{figure}
\end{center}

\end{section}

\section{Conclusions} 
\label{section: conclusions}
This paper proposes a Weiszfeld-like algorithm for solving the Weber problem constrained to a closed and convex set, and it is well defined even when an iterate is a vertex. The algorithm consists of two stages: first, iterate using the fixed point modified Weiszfeld iteration (\ref{equation: iteration function of the modified algorithm with beta}), and second, either project onto the set $\Omega$ when the iterate is different from the vertices, or, if the iterate is a vertex $a^k$, take the point belonging to the line that joins $T(a^k)$ with $a^k$ as defined in (\ref{equation: iteration function of the proposed algorithm}). 

It is proved that the constrained problem (\ref{equation: constrained weber problem}) has a unique solution. Besides that, the definition of the iteration function $Q$ allows us to demonstrate that the proposed algorithm produces a sequence $\left\{ x^{(l)} \right\}$ of feasible iterates. Moreover, the sequence $\left\{ f \left( x^{(l)} \right) \right\}$ is not increasing, and when $x^{(l)} \neq Q\left( x^{(l)} \right)$, the sequence decreases at the next iterate. It can be seen that if a point $x^*$ is the solution of the problem (\ref{equation: constrained weber problem}) then $x^*$ is a fixed point of the iteration function $Q$. Even more, if $x^*$ is different from the vertices, the fact of being $x^*$ a fixed point of $Q$ is equivalent to the fact that $x^*$ satisfies the KKT optimality conditions, and equivalent to the fact that $x^*$ is the solution of the problem (\ref{equation: constrained weber problem}). These properties allows us to connect the proposed algorithm with the minimization problem. 

Numerical experiments showed that the proposed algorithm found equal or better solutions than a well-known standard solver, in a practical example with 1000 random choices of vertices and weights. That is due to the fact that the proposed algorithm does not use of the existence of derivatives at the vertices, because the Weber function is not differentiable at the vertices.


\end{document}